\numberwithin{equation}{section}
\newtheorem{theorem}{Theorem}[section]
\newtheorem{definition}[theorem]{Definition}
\newtheorem{corollary}[theorem]{Corollary}
\newtheorem{lemma}[theorem]{Lemma}
\newtheorem{proposition}[theorem]{Proposition}
\newtheorem{remark}[theorem]{Remark}
\newcounter{as}[section]
\newcommand{\mc}[1]{{\mathcal #1}}
\newcommand{\bb}[1]{{\mathbb #1}}
\newcommand{\<}{\langle}
\renewcommand{\>}{\rangle}
\renewcommand{\>}{\rangle}
\definecolor{Red}{cmyk}{0,1,1,0}
\definecolor{Blue}{cmyk}{1,1,0,0}
\DeclareMathOperator{\sgn}{sgn}
\DeclareMathOperator{\supp}{supp}
\DeclareMathOperator{\Ci}{Ci}
\DeclareMathOperator{\Cin}{Cin}
\DeclareMathOperator{\spann}{span}
\def\1{{\mathchoice {\rm 1\mskip-4mu l} {\rm 1\mskip-4mu l}
{\rm 1\mskip-4.5mu l} {\rm 1\mskip-5mu l}}}
\title[Fractional Edgeworth expansions]{Fractional Edgeworth expansions  for one-dimensional heavy-tailed random variables and applications}
\keywords{fractional Edgeworth expansion; local central limit theorem; potential kernel; stable distributions; heavy-tailed random walks; fluctuations; discrete stochastic linear stochastic equations}
\subjclass[2010]{Primary: 60J45, 60G50, 60E10; Secondary: 60G52, 60E07}
\author[L. Chiarini]{\small Leandro Chiarini}
\address{Utrecht University, Budapestlaan 6, 3584 CD Utrecht, The Netherlands}
\email{l.chiarinimedeiros@uu.nl}
\author[M. Jara]{\small Milton Jara}
\address{IMPA, Estrada Dona Castorina 110, 22460-320, Rio de Janeiro, Brazil}
\email{mjara@impa.br}
\author[W. M. Ruszel]{\small Wioletta M. Ruszel}
\address{Utrecht University, Budapestlaan 6, 3584 CD Utrecht, The Netherlands}
\email{w.m.ruszel@uu.nl}
\date{\today}
\begin{document}

\maketitle

\begin{abstract}
In this article, we study a class of lattice random variables in the domain of attraction of an $\alpha$-stable random variable with index $\alpha \in (0,2)$ which satisfy a truncated fractional Edgeworth expansion.
Our results include studying the class of such fractional Edgeworth expansions under simple operations, providing concrete examples; sharp rates of convergence to an $\alpha$-stable distribution in a local central limit theorem; Green's function expansions; and finally fluctuations of a class of discrete stochastic PDE's driven by the heavy-tailed random walks belonging to the class of fractional Edgeworth expansions.
\end{abstract}

\section{Introduction and overview of the results}

Edgeworth expansions refer to asymptotic expansions of the cumulative distribution function (CDF) $F_X(\cdot)$ of properly centered and rescaled random variables $X$ (under some moment assumptions) which control the error, see e.g.
\cite{edge} and references therein.
Although the central limit theorem characterises the limiting distribution and the Berry-Ess\'een theorem quantifies uniform error bounds, they are not able to capture important quantities of the distribution such as skewness or kurtosis.
Those quantities can be seen, again under some higher moment assumptions, in Edgeworth expansions which are polynomial series of the CDF with coefficients related to the cumulants.
As a consequence, one can obtain local central limit theorem convergence rates and potential kernel expansions.

If the sequence of random variables is in the domain of attraction of an $\alpha$-stable distribution with $\alpha \in (0,2)$, then the variance and even the mean might not exist, hence Edgeworth expansions in the classical sense are not well defined.

	To overcome the lack of moments of order greater or equal to $\alpha$ for random variables in the domain of attraction of an $\alpha$-stable distribution, other quantities were studied.
For instance, Bergstr\"om introduced in the 1950s the concept of \textit{pseudo-moments} and \textit{difference moments}, see \cite{bergstrom1953distribution}.
Pseudo-moments are not useful in the context of integer-valued random variables since they are infinite, see e.g. Lemma 2.7 in \cite{christoph1992convergence}.

Let us explain the concept of difference-moments in the following.
Denote by $F_{\bar{X}}(\cdot)$, the CDF of a random variable in the domain of attraction of an $\alpha$-stable random variable $\bar{X}$, $k \ge \alpha$ be an integer and $F_X(\cdot)$ the CDF of a (continuous) random variable $X$.
Although the k-th moment is not finite, it is possible to define for some $r > \alpha$ the quantity
\begin{equation}\label{eq:diff-moment} 
  \chi_r=  r\int_{\bb R} |x|^{r-1} |H(x)|dx < \infty,
\end{equation}
where $H(\cdot)$ is defined as $H(x):=F_X(x)-F_{\bar{X}}(x)$.
In this case, we have that for every integer $k < r$, the quantity 
 $\eta_k:=  -k\int_{\bb R} x^{k-1} dH(x)$ is well defined, implying the following expansion for the characteristic function of $X$ in terms of the characteristic function of $\bar{X}$ 
\begin{equation}\label{eq:arbitrary-exp} 
  \phi_X(\theta)= \phi_{\bar{X}}(\theta)
  +
  \sum_{k = 0}^{\lfloor r \rfloor} \eta_k\frac{(i\theta)^k}{k!}
  +
  \mc O(|\theta|^r)
  \text{ as } \theta \to 0. 
\end{equation}
In case the quantity $\eqref{eq:diff-moment}$ is not well-defined, similar expansions could be obtained using \textit{integral difference moments}, see  
\cite{christoph1992convergence} and references therein (particularly \cite{christoph1984asymptotic} for the case of integer-valued random variables).

Let us emphasize two shortcomings from the methods considered above.
The first is that it only allows integer powers of $\theta$, and therefore the difference $(\phi_X-\phi_{\bar{X}})$ needs to be differentiable up to a high order for the sum in the expansion to be non-trivial.
The second shortcoming comes from the necessity for closed expressions for $p_X(\cdot)$ and/or $p_{\bar{X}}(\cdot)$ in order to apply the methods above.
This translates into two possible restrictions: i) restricting the of choice of stable distribution to one of the cases for which we have an analytic expressions for $p_{\bar{X}}(\cdot)$; or ii) restricting the choice of discrete random walks to a specific discretisation of $p_{\bar{X}}(\cdot)$  given by 
\begin{equation}\label{eq:dist-from-integral} 
  p_X(x):=\int_{x-1/2}^{x+1/2} p_{\bar{X}}(y)dy. 
\end{equation} 

\subsection*{Overview of the results}
In this article, we seek to generalise such classes and provide simple examples with explicit distributions.
In particular, we will study \textit{fractional Edgeworth expansions} of the characteristic function of an integer-valued $X$ (instead of the corresponding CDF), generalizing the difference moments approach to not necessarily positive integer powers in the expansion.
More precisely, assume that the common characteristic function of the collection of integer-valued random variables $(X_n)_{n\geq 0}$ satisfies the following expansion with respect to $\alpha \in (0,2)$ and regularity set $R_{\alpha} \subset (\alpha, 2+\alpha):$
\begin{align}\label{intro-phi}
	\phi_X(\theta) = 1
	- \mu_\alpha |\theta|^\alpha  
	+ i\mu^\prime_\alpha \sgn(\theta) |\theta|^\alpha  
	+ \sum_{\beta \in R_\alpha} \mu_\beta |\theta|^{\beta}
	+ \sum_{\beta \in R_\alpha} i\mu^\prime_\beta \sgn(\theta)|\theta|^{\beta} 
	+ \mc O\Big(|\theta|^{2+\alpha}\Big)
\end{align}
as $ |\theta|\longrightarrow 0$ with constants $\mu_\alpha >0, \mu_\alpha^\prime, \mu_{\beta}, \mu_\beta^\prime  \in \bb R$.
We will refer to the constants $\mu_\beta,\mu^\prime_\beta$ for $\beta \in R_\alpha \cup \{\alpha\}$ as the \textit{fictional moment of order $k$} of $X$ (or of $p_X(\cdot)$).
Notice that \eqref{intro-phi} implies that the distribution associated to $\phi_X$ is in the domain of attraction of an $\alpha$-stable distribution. 
The concept of the regularity set $R_{\alpha}$ is roughly inspired by the \textit{index set} $A$, which appears in the definition of \textit{regularity structures} in \cite{hairer2015}, where $A$ encodes possible ``homogeneities" on the several levels of (ir)regularity of the objects being studied.

The truncation at level $2+\alpha$ is somewhat arbitrary and related to the concrete examples which we will discuss. 
We say that $p_X(\cdot)$  is admissible if its characteristic function satisfies \eqref{intro-phi},  we will denote this by $p_X \in \mc A$. 

Note that the term fractional cumulant appeared in the physics paper \cite{hazut2015fractional} for the first time when considering symmetric random variables whose density is defined as the inverse Fourier transform of a infinite series of fractional powers of $|\theta|$.
Notice that their results are based on a precise infinite series for the characteristic function rather than an approximated expression.

Our results can be categorized into four types of contributions. We will sketch informally the main results for each contribution  below.

\subsubsection*{1. Class of admissible probability mass functions}

In Section \ref{sec:example} we will study properties of the class of admissible probability mass functions or distributions for short.
In particular, we will show in Lemma \ref{lem:closed-by-operations} that the class is closed under operations such as convolution and taking convex combinations, i.e. for $p_1, p_2 \in \mathcal{A}$ we have that $p_1*p_2\in \mathcal{A}$ and $\delta p_1+(1-\delta)p_2 \in \mathcal{A}$ for $\delta \in [0,1]$.
This will be used in Proposition \ref{prop:adm-are-enough} to prove that any stable random variable $\bar{X}$ with parameters $(\alpha, \beta, \gamma, \varrho)$ (see Definition~\ref{def:stable}) can be approximated by an admissible distribution, i.e.
there exists $p_X \in \mathcal{A}$ such that for the corresponding characteristic function we have that 
\[
\phi_X(\theta) = \phi_{\bar{X}}(\theta) + o(|\theta|^{\alpha})
\]
for $\alpha \in (0,2)\setminus \{1\}$ and $\theta$ small enough.
The main example is discussed in Proposition~\ref{prop:asymp-phi-alpha}.
We consider the transition probability of a heavy-tailed random walk given by the transition probability
\[
p_{\alpha}(x,y) = \frac{c_{\alpha}}{|x-y|^{1+\alpha}}, \, \, x\neq y, \alpha \in (0,2)
\]
and show that $p_{\alpha} \in \mathcal{A}$, $R_{\alpha} = \{2\}$ and determine the fictional moments of the expansion. 

\subsubsection*{2. Local central limit theorem}
Central limit theorems and local central limit theorems (LCLT) are fundamental results in probability theory.
There exists a vast literature providing different types of LCLT results (or local stable limit theorems) in the stable setting with explicit and implicit convergence rates, e.g.~\cite{basu1976local, basu1979non,  BergerNote, cara, Gnedenko, ibra, Mineka,rvaceva1961domains, stone1965local}.

To our knowledge, the  best explicit non-uniform convergence rate for 1d absolutely continuous $X$ was proven in \cite{dattatraya1994non}, where the author showed under some integrability conditions on the characteristic function, that for any $\alpha \in (0,2)$:
\begin{equation} \label{eq:previous-lclt}
	|x|^\alpha \left| p^n_{X}(x) - p^n_{\bar{X}} (x)\right| 
	\leq C n^{\gamma},
\end{equation}
where $\bar{X}$ is the  stable distribution with index $\alpha$ where $\gamma=1- \frac{2}{\alpha}$ if $\alpha \in [1,2)$ and $\gamma = 1- \frac{1}{\alpha}$ if $\alpha \in (0,1)$.
As for uniform  bounds in $x$, without further assumptions in the law of the step distributions, one can use classical results of convergence of random variables (such as in \cite{rvaceva1961domains, stone1965local}) which imply that 
\begin{equation}\label{eq:previous-unif}
	n^{\frac{1}{\alpha}}\left|
	p^n_{X}(x) - p^n_{\bar{X}} (x)\right| = o (1).
\end{equation}
Our main result concerning sharp LCLT convergence rates is Theorem  \ref{thm:lclt}:
\[
  \sup_{x\in \bb Z} \left |p^n_{X}(x) - p^n_{\bar{X}} (x)\right| 
  \le C n^{-\frac{\beta_1+1-\alpha}{\alpha}}
\]
for some positive constant $C$, where $\beta_1 = \beta_1(R_{\alpha})$ depends on the regularity set $R_{\alpha}$.
In fact, we obtain in Corollary \ref{corol:assymp-repairable} that $\beta_1=2\alpha$ in the case that $\mu_{\beta}=0$ for all $\beta \notin \{\alpha, 2\}$.
Depending on the sign of $\mu_2$ either the original or final distribution has to be modified by a distribution of the ``correct"  finite variance random variable to prevail the strong convergence rate.
The introduced error is of order $\mathcal{O}(n^{-\frac{1}{\alpha} + (1-\frac{2}{\alpha})})$ which will vanish as $n\rightarrow \infty$.

The modification idea is natural and has shown to be very fruitful for example in \cite{Frometa2018} where the authors used it to obtain better convergence rates of a truncated Green's function in $\bb Z^2$.

\subsubsection*{3. Green's function estimates}
Concerning discrete potential kernel or Green's function behaviour there has been some asymptotic estimates obtained in \cite{amir2017, Bass,BergerNote, berger2019, Williamson} and \cite{Widom} in the continuum.
In \cite{Williamson}, the author proves that for $\alpha \in (0,2)$ the discrete potential kernel 
(see Definition~\ref{def:potential-green}) is asymptotic to $\|x\|^{d-\alpha}L(|x|)$ where $L(\cdot)$ is a slowly varying function, whereas \cite{berger2019} obtains similar asymptotics for processes on $\mathbb{Z}^d$ with index $\alpha =(\alpha_1,\dots,\alpha_d)$ and  $\alpha \in (0,2]^d$, $d\geq 1$.
In Theorem \ref{thm:Green-adm} we prove for $\alpha \in [1,2)$ that there exist explicit constants $C_{\alpha},C_0, C_{\delta}$ such that for $|x|\rightarrow \infty$ and $\delta := \min( R_\alpha )$ we have that there are constants $C_0,C_\delta$ such that
\begin{itemize}
\item[(i)] If $\delta < 2\alpha-1$, then 
	\[
		a_X(0,x)= C_\alpha |x|^{\alpha-1} 
		+
		C_{\delta} |x|^{2\alpha-\delta-1}
		+ \mc O (|x|^{2\alpha-\delta-1}),
	\]
\item[(ii)] if $\delta > 2\alpha-1$, then 
	\[
		a_X(0,x)=   C_\alpha |x|^{\alpha-1} 
		+C_0+
		o(1),
	\]
\item[(iii)] if $\delta = 2\alpha-1$, then
	\[
		a_X(0,x)=  C_\alpha |x|^{\alpha-1}  +
		C_{\delta}\log |x| + \mc O (1)
	\]
	as $x \to \infty$. 
\end{itemize}
In particular, for $R_{\alpha} \in \{\varnothing, \{2\}\}$, we provide in Theorems \ref{thm:Green} and \ref{thm:green-1} more explicit asymptotic expansions.
We also  provide similar results for the Green's function (see Definition~\ref{def:potential-green}) in the regime $\alpha \in (2/3,1)$ in Theorem~\ref{thm:green-less1}. 

The proofs of the potential kernel bounds are original and  they exploit the asymptotics of the characteristic function together with H\"older continuity instead of using the LCLT as a starting point like in the classical case \cite{Limic10}.

We believe our estimates would be useful to derive rates of convergence of the hitting measure from discrete to the continuous case.
This convergence was first proven in \cite{kesten1961theorem}, but as far as the authors are aware, has no quantitative estimates.
Quantitative estimates could allow us to study limit shape theorems for growth models, such as the internal diffusion limited aggregation (iDLA) driven by long-range random walks.

\subsubsection*{4. Fluctuations of the scaling limit of fractional Gaussian fields}

Finally, as an application of the fractional Edgeworth expansion, we look turn to discrete stochastic partial differential equations.

Consider fields $\Xi^m$ which solve the equation
\[
\begin{cases}
  \mc	L^m \Xi^m(x) = \xi^m(x)  - \sum_{z \in \bb T_m }  \xi^m(z), & \text{ if } x \in \bb T_m \\
  \sum_{x \in \bb T_m} \Xi^m(x)=0
\end{cases} 
\]
where $\bb T_m = 2\pi((-m/2,m/2]\cap\bb Z)$ is the discrete torus, $\mc L^m$ is the semigroup of the random walk obtained by periodising the distribution $p_X(\cdot)$ around $\bb T_m$, and $(\xi^m(x))_x$ is a collection of i.i.d. random variables with finite variance.
We know that $\Xi^m$ (when renormalised) converges to a fractional Gaussian field $\Xi_{\alpha} $ of order $\alpha$ i.e the solution of the equation in the continuum where the noise is substituted by the white-noise on the torus, \cite{chiarini2021odometer}.
We will show in Theorem~\ref{thm:converge-of-fields-elip}, if the first non-trivial fractional cumulant of $p_X$ is of order sufficiently small, for an appropriate coupling between $\Xi^m$ and $\Xi_{\alpha}$, 
\[
C_1 m^{\beta - \alpha} (\Xi^m - C_2 \Xi_{\alpha}) \longrightarrow \Xi_{2\alpha -\beta}
\]
in probability as $m\rightarrow \infty$ in some appropriate Sobolev space $H^{-s}(\bb T)$, for some chosen $s$ depending on $\alpha, \beta$ and $\bb T$ the continuous torus.

Furthermore, we present similar results for the field satisfying the parabolic version of the equation above in Theorem \ref{thm:converge-of-fields-parab}.

The novelties of the paper include the construction and exploration of important properties of fractional Edgeworth expansions for stable random variables. 
We study the effect of having non-trivial fictional moments of higher orders and obtain sharp convergence rates by developing a repairing approach of the corresponding distributions. 
Besides that, we believe that the application given in Section~\ref{sec:second-order-conv} opens the possibility of similar results for general continuous objects constructed as the scaling limit of fractional Gaussian fields, such as Gaussian multiplicative chaos, parabolic Anderson model, and solutions to non-linear equations.

\subsection*{Structure of the article}

In Section \ref{sec:def}, we provide the setting and introduce necessary definitions.
In Section  \ref{sec:results} we state our main Theorems.
The subsequent Section \ref{sec:general} contains a discussion about the results and possible generalizations in different directions.
Section \ref{sec:example} deals with determining the expansion of the characteristic function for an explicit example of a long-range random walk and showing that it falls into the class $\mathcal{A}$ we consider in this article.
Section \ref{sec:lclt} contains all proofs regarding LCLT's and in Section \ref{sec:Green} we demonstrate estimates on the discrete Green functions/potential kernels.
Finally, Section~\ref{sec:second-order-conv} deals with fluctuations of the scaling limits of fractional Gaussian fields.
Some technical lemmas are postponed to the Appendix.

\subsection*{Acknowlegments} 
The authors would like to thank S. Fr\'ometa for conversations about early versions of this article.
We would also like to thank the anonymous referees, their suggestions helped to significantly improve this article.
L. Chiarini was financially supported by CAPES and the NWO grant OCENW.KLEIN.083.
M. Jara was funded by the ERC Horizon 2020 grant 715734, the CNPq grant 305075/2017-9 and the FAPERJ grant E-29/203.012/201.

\section{Definitions}\label{sec:def}

In this section, we will introduce all necessary notation and define the main objects.
We will denote by $\bb T = (-\pi,\pi]$ the one-dimensional torus.
Given $z \in \bb R$ and $r >0$, we write $B_r(z)$ to denote the interval $(z-r/2,z+r/2]$ around $z$ with length $r$.
For $f,g:\mathbb{R} \rightarrow \mathbb{R}$ we write \[ f(x) \lesssim g(x) \] if there exists a constant $C>0$, which does not depend on $x$, such that $f(x) \leq C g(x)$, analogously for $\gtrsim$.
The functions $\lfloor \cdot \rfloor$ and $\lceil \cdot \rceil$ denote the floor and ceiling functions, respectively.
We will write $\bb Z^+:=\{0\}\cup \bb N$.

Given finite sets of positive real numbers $A,B \subset \bb R^+$, we define its sum by 
\[
	A+B :=\left\{ a+b: a \in A, b \in B \right\},
\]
and 
\[
	\spann ( A ) := \left\{ \sum_{a \in A} l_a a : l_a \in \bb Z^+, a \in A \right\}.
\]

Let $C^{k,\gamma} (\bb R)$ denote the space of function in $\bb R$ with $k \ge 0$ derivatives s.t the $k$-th derivative is $\gamma$-H\"older continuous with $\gamma \in (0,1]$.
We will denote by $C^{k,\gamma} (\bb T)$ the subspace of $C^{k,\gamma} (\bb R)$ composed by $2\pi$-periodic functions.
The notation $f \in C^{k,\gamma-} (\bb T)$ will be used for $f \in C^{k,\gamma- \varepsilon}(\bb T)$ for $\varepsilon \in (0,\gamma)$ sufficiently small.
Similarly we will use the short notation $f(x) = \mc O (|x|^{\beta \pm })$ for $f(x) = \mc O(|x|^{\beta \pm \varepsilon})$ for all $\varepsilon >0$ sufficiently small, where $\mc O (\cdot)$ is the standard big-O notation.

We will call  $\mc F(f) $  the Fourier transform of $f$ given by
\[
	\mc F (f)(\theta) := \int_{\bb R} f(x) e^{i \theta \cdot x} dx
\]
for $\theta \in \bb R$ resp.~$\mc F_{\bb T}$ for $k\in \bb N$
\[
	\mc F_{\bb T} (f) (k) := \int_{\bb T} f(x) e^{i k \cdot x} dx.
\]

Let $(X_i)_{i\in \mathbb{N}}$ be a sequence of i.i.d.~integer-valued random variables defined on some common probability space $(\Omega, \mathcal{A}, \mathbb{P})$.
Denote by $p_X(\cdot)$ the probability distribution of $X$, with support in $\bb Z$. 
 We write shorthand $X$ instead of $X_i$ when we refer to one single random variable.
Call $S_n:=\sum_{i=0}^n X_i$ its sum and abbreviate by $p^n_X(\cdot)$ the corresponding probability distribution.
Denote by
\[
	\phi_X(\theta) := \bb E\Big[ e^{i \theta \cdot X} \Big], \, \, \, \theta \in \mathbb{R}
\]
its common characteristic function.   

\begin{definition}\label{def:stable}
For all $\alpha \in (0,2)\setminus \{1\}$, $\beta \in [-1,1]$, $\gamma \in (0,\infty)$, and $\varrho \in (-\infty,\infty)$, call $\bar{X}$ the stable random variable with parameters $(\alpha,\beta,\gamma,\varrho)$ if its the characteristic function is given by
\begin{align*}
	\phi_{\alpha,\beta,\gamma,\varrho}(\theta)=\exp \left(i \theta \varrho-|\gamma \theta|^{\alpha}\left(1-i \beta \sgn(\theta) \tan\left(\frac{\pi\alpha}{2}\right)\right)\right),
\end{align*}
for every $\theta \in \bb R$. 
\end{definition}
We will call $\bar{X}$ a \textit{symmetric stable random variable} with index $\alpha \in (0,2)$ for short, if
\begin{equation}\label{eq:def-symmetric-cont}
	\phi_{\bar{X}} (\theta) = e^{-\mu_{\alpha} |\theta|^{\alpha}}.
\end{equation}
Observe that in that case, $\gamma$ satisfies $\gamma=(\mu_{\alpha})^{1/\alpha}$.
Let $p_{\bar{X}}(\cdot)$ denote the density resp. $p^n_{\bar{X}}(\cdot)$ its $n$-th convolution.

For $\alpha = 1$, we will only consider the case $\beta=0$, given by the expression
\begin{align*}
	\phi_{1,0 ,\gamma,\varrho}(\theta)=\exp \left(i \theta \varrho-|\gamma \theta|\right).
\end{align*}
In the following let us define the class of random variables which we will consider in this article.

\begin{definition} \label{regularity}
Let $\alpha \in (0,2]$ and let $R_\alpha \subset (\alpha,2+\alpha)\cup \left(\bb N \cap (2+\alpha)\right)$ be a finite set.
Denote by $p_X(\cdot)$ the probability distribution of a random variable $X$ with support in $\bb Z$. We say that $X$ admits a fractional Edgeworth expansion with index $\alpha$ and regularity set $R_\alpha$,  if its corresponding characteristic function $\phi_X(\theta)$ satisfies the following expansion
\begin{align} \label{def:char-alpha}
	\phi_X(\theta) = 1
	- \mu_\alpha |\theta|^\alpha  
	+ i\mu^\prime_\alpha \sgn(\theta) |\theta|^\alpha  
	+ \sum_{\beta \in R_\alpha} \mu_\beta |\theta|^{\beta}
	+ \sum_{\beta \in R_\alpha} i\mu^\prime_\beta \sgn(\theta)|\theta|^{\beta} 
	+ \mc O\Big(|\theta|^{2+\alpha}\Big)
\end{align}
as $|\theta| \longrightarrow 0$, for constants $\mu_\alpha >0$ and $\mu^\prime_\alpha, \mu_\beta, \mu^\prime_\beta \in \bb R\setminus \{0\}$, for all $\beta \in R_\alpha$.
For $\alpha=1$,  we further require the law of $p_X(\cdot)$ to be symmetric.
The constants $\mu_\beta,\mu_\beta^\prime$ are referred to as the \textit{fictional moments of order $\beta$ of $p_X(\cdot)$}. Equivalently we will also simply say that $p_X(\cdot)$ is \textit{admissible} or $p_X \in \mc A$. 
\end{definition}

We will always assume that the set $R_\alpha$ is optimal, i.e, $|\mu_\beta|+|\mu^\prime_\beta|>0$ for all $\beta \in R_\alpha$. 
It is important to recall that the constants $\mu_{\alpha}, \mu^\prime_\alpha,\mu_\beta,\mu^\prime_\beta$, given in the definition above, depend on the law of $p_X(\cdot)$.

Using the expansion given in \eqref{def:char-alpha} and the Taylor polynomial of $\log (1+t)$ for $|t|<1$, setting $t:=\phi_X(\theta)-1$, we get that $\phi_X(\cdot)$ can be written as
\begin{align} \label{def:char-alpha-2}
		\phi_X(\theta) = e^{-\mu_\alpha |\theta|^\alpha 
		+
		r_X(\theta)
		+\mc O(|\theta|^{2+\alpha})},\quad\text{as } |\theta|\longrightarrow 0,
\end{align}
where
\[
	r_{X}(\theta)
	=
	\sum_{\beta \in  J_\alpha} \kappa_\beta|\theta|^{\beta}
	+
	\sum_{\beta \in  \{\alpha\} \cup J_\alpha} i \sgn(\theta)  \kappa_\beta^\prime |\theta|^{\beta},
\]
and the coefficients $\kappa_j$ are combinations of coefficients coming from the
expansion of the logarithm and  the powers $|\theta|^{\alpha}$
resp.~$|\theta|^{\beta}$.  
We will refer to $\kappa_\beta$ for all $\beta \in J_\alpha$ as the \textit{fractional cumulants of} $p_X(\cdot)$. 

Furthermore, let 
\begin{equation}\label{def:Jalpha}
	J_\alpha :=  \spann (R^+_{\alpha} ) \cap (\alpha,2+\alpha),
\end{equation}
where $R^+_{\alpha} := R_\alpha\cup \{\alpha\}$.
In a similar way we define $J_\alpha^+:= J_\alpha \cup \{2+\alpha\}$.

Remark that if $R_{\alpha}=\emptyset$ we have that $J_{\alpha} = \alpha \bb N \cap (\alpha, 2+\alpha)$ and therefore, in general we have $\beta_1 :=\min (J^+_{\alpha}) \leq 2\alpha$. 
Our \textit{regularity set} $R_{\alpha}$ is a finite collection of powers of $|\theta|$ in the expansion of the characteristic function, up to orders which are strictly smaller than $2+\alpha$.

Our main example of an admissible distribution with index $\alpha \in (0,2)$ and $R_{\alpha}=\{2\}$ is given by
\begin{equation} \label{def:palpha}
	p_{\alpha}(x)  :=
	\begin{cases}
		c_\alpha |x|^{-(1+\alpha)}, &\text{ if } x \neq 0,\\
		0,&\text{ if } x=0,
	\end{cases}
\end{equation}
where $c_{\alpha}$ is the normalising constant. 
We will discuss this example and many others in Section \ref{sec:example}.

An example of a distribution which is \textit{not admissible} is $p_\alpha(\cdot)$, defined in \eqref{def:palpha} with $\alpha=2$.
In fact, in this case the characteristic function has the expansion 
\[
	\phi_X(\theta) = 1- \mu_2|\theta|^2 \log (|\theta|) + \mc O (|\theta|^2).
\]
In \cite{nandori2011recurrence}, the author discusses some properties of this particular example including its recurrence and LCLT estimates.

In order to explore the idea of improving rates of convergence of a given random variable, we will concentrate on a particular subset of admissible distributions.
Then we will subdivide the class of admissible distributions in a subclass w.r.t.~regularity sets $R_{\alpha}\in \{ \varnothing, \{2\}\}$ and a subclass w.r.t.~general $R_{\alpha}$.
The first subclass will be further subdivided in three classes which will have different asymptotic behaviour as $n \rightarrow \infty$.
 
\begin{definition}
Let $X$ be a symmetric random variable such that $p_X \in  \mc A$ and  $R_\alpha \in \{\emptyset,\{2\}\}$. 
Then we say that $p_X(\cdot)$ belongs to one of the following three classes:
\begin{enumerate}
	\item[(i)] \textit{repaired} if  $R_\alpha =\emptyset$
	\item[(ii)] \textit{locally repairable}  if $R_\alpha=\{2\}$ and  $\mu_2 >0$
	\item[(iii)] \textit{asymptotically repairable} if $R_\alpha=\{2\}$ and  $\mu_2 <0$.
\end{enumerate}
\end{definition}

A \textit{locally repairable} probability distribution $p_X(\cdot)$ can be \textit{repaired} by convoluting it with a simple discrete random variable with variance $2|\mu_2|$ which plays the part of a \textit{repairer}.
Analogously, we can repair an \textit{asymptotically repairable} probability distribution $p_X(\cdot)$.
This repairing is not performed on $p_X(\cdot)$ itself.
Instead, we repair its asymptotic distribution $p_{\bar{X}}(\cdot)$ by convoluting $\bar{X}$ with a normal random variable with variance $2|\mu_2|$.
In both cases, the aim is to change either the original random variable $X$ or its stable limit $\bar{X}$ in order to cancel the contribution from $\mu_2$.

\begin{definition} \label{def:repairer}
Let $p_X(\cdot)$ be admissible with index $\alpha \in (0,2)$ with regularity set
$R_{\alpha}\in \{ \varnothing, \{2\}\}$ and let $\mu_2$ be the constant
defined in the expansion of $\phi_X(\cdot)$. 
\begin{itemize}
\item[(i)] If $p_X(\cdot)$ is locally repairable, we call the repairer  an
  independent random variable $Z$ with probability distribution given by
\begin{equation} \label{eq:def-repairer}
	p_Z(x) =
	\begin{cases}
		\frac{\mu_2}{M^2}, & \text{ if } |x|=M\\
		1-\frac{2\mu_2}{M^2}, & \text{ if } x=0\\
		0, & \text{	otherwise,}
	\end{cases}
\end{equation}
where $M = \lceil \sqrt{2 \mu_2}\rceil \in \bb N$.
\item[(ii)] If $p_X(\cdot)$ is asymptotically repairable,  we call an asymptotic repairer a random variable $\bar{Z}$ such that $\bar{Z} \sim \mathcal{N}(0, 2|\mu_2|)$.
We will assume that $\bar{Z}$ is independent of $\bar{X}$, whose characteristic function is given by \eqref{eq:def-symmetric-cont}. 
 \end{itemize}
\end{definition}
By construction,  the characteristic function of a repairer $Z$ has the expansion
\begin{align*}
	\phi_{Z}(\theta)
	 &=
	1- \mu_2 |\theta|^2 + \mc O (\theta^4), \qquad\text{ as } |\theta| \longrightarrow 0.
\end{align*}
It is easy to see that $p_{X+Z}(\cdot) = p_X*p_Z(\cdot)$ is in fact repaired.
The asymptotic repairer $\bar{Z}$ is such that the characteristic function of
$\bar{X}+\bar{Z}$ is equal to
\[
	\phi_{\bar{X}+ \bar{Z}}(\theta) = e^{-\mu_{\alpha} |\theta|^{\alpha} -\mu_2 |\theta|^2}.
\]

Note that in both cases we do not change the limiting distribution of $n^{-1/\alpha} S_n$.
Indeed,  this modification will introduce an error of order $\mathcal{O}(n^{1- \frac{3}{\alpha}})$ which vanishes as $n\rightarrow \infty$.

Let us remark that alternatively one could \textit{repair} by taking a convex combination as in \cite{Frometa2018}.
Different repairing methods might be more convenient depending on the context.

The idea of repairing random variables is reminiscent of the \textit{Lindeberg principle}, see \cite{linde} for the original article and \cite{lindestable} for a proof of the stable law by Lindeberg principle.
In the classical setting, its main idea is to explore the effect of swapping each discrete random variable by a Gaussian one with the correct mean and variance.
However, in this case, we are trying to match the discrete and the continuous random variable up to a higher order (fictional) moment.
This is done either by perturbing the law of the discrete or of the continuous random variable by a random variable which is strictly more regular (in the sense that it is integrable up to a higher order).
We believe that this can be extended to match not only one extra fictional moment (as in the case of the repairable random variables) but any finite Edgeworth expansion.
In Section~\ref{sec:general}, we discuss this idea further.

Finally, let us define the potential kernel for a random walk, whose transition probability  $p_X(\cdot) := p_X(\cdot, \cdot)$ is admissible with index $\alpha \in [1,2)$ and regularity set $R_{\alpha}$.

\begin{definition} \label{def:potential-green} 
Let $(X_i)_{i\in \mathbb{N}}$ be a sequence of, i.i.d.~random variables such that $p_X(\cdot)$ is admissible.
Call $S_n=\sum_{i=1}^n X_i$ and $p^n_X(\cdot, \cdot)$ its transition probability.
If $(S_n)_{n \in \bb N}$ is recurrent, we define the \textit{potential kernel} of $p_X(\cdot)$ as 
\[
	a_{X}(0,x) = \sum_{n=0}^{\infty}(p_X^n(0,x) - p_X^n(0,0)), \, \, \text{ for } x\in \bb Z.
\]
If the random walk $(S_n)_{n \in \bb N}$ is transient, we define the \textit{Green's function} of $p_X(\cdot)$ as 
\[
	g_{X}(0,x) = \sum_{n=0}^{\infty}p_X^n(0,x), \, \, \text{ for } x\in \bb Z.
\]
Henceforth, we will abbreviate $p_X(x):= p_X(0,x)$.
We will also write $a_X(x)=a_{X}(0,x)$ and $g_X(x)=g_{X}(0,x)$. 
\end{definition}

We will need a few more definitions to study the scaling limits of discrete PDEs.
For $m \ge 1$, let $\bb T_m:= 2\pi \bb  Z_m = 2\pi((-m/2,m/2]\cap \bb Z)$.
For $f_1,f_2 \in  L^2(\bb T)$, denote their inner product by
\begin{align*}
  \< f_1,f_2 \>:= 
  \< f_1,f_2 \>_{L^2(\bb T)} = 
  \frac{1}{2\pi}
  \int_{\bb T} f_1(x) \overline{f_2(x)} dx.
\end{align*}
We extend this notation to the case in which $f$ is a distribution and $g$ is a suitable test function as the action of $f$ on $g$. 
We then abuse the notation to also describe the inner product  of two functions in $\ell^2(\bb T_m)$ as 
\begin{align*}
  \< f_1,f_2 \> = 
  \< f_1,f_2 \>_{\ell^2(\bb T_m)} = 
  \frac{1}{2\pi m}
  \sum_{x \in \bb T_m} 
  f_1(x) \overline{f_2(x)}.
\end{align*}
This abuse of notation comes from the fact that given two functions $f_1, f_2\in C(\bb T)$, let $f_i^m$ be the restriction of $f_i$ to $\bb T_m$, then we have that $\< f_1^m, f_2^m \>_{\ell^2(\bb T_m)}\to \< f_1, f_2 \>_{L^2(\bb T)}$.
Moreover, both spaces share the same orthonormal basis given by Fourier functions.
That is, consider $\{{\bf e}_k\}_{k \in \bb Z}$ be given by ${\bf e}_k(x):= \exp\left(i k\cdot x\right)$, a orthonormal basis of $L^2(\bb T)$. 
Likewise, consider $\{{\bf e}^m_k\}_{k \in \bb Z_m}$ where ${\bf e}^m_k$ be given by the restriction of ${\bf e}_k$ to $\bb T_m$, this collection forms an orthonormal basis of $\ell^2(\bb T_m)$ .

For $s \ge 0$, consider the Hilbert space $H^{s}(\bb T)$ induced by the following inner product
\begin{equation*}
  \< f,g \>_{H^{s}}
:=
  \< f, g \>_{L^2} 
  +
  \sum_{k \in \bb Z}
  \mc F_{\bb T}(f)(k)
  \overline{\mc F_{\bb T}(g)(k)}
  |k|^{4s}.
\end{equation*}
The subset of functions which has $\mc F_{\bb T}(f)(0)= \int_{\bb T} f(x)dx = 0 $ is denoted by $H^s_0$. 
When studying $H^s_0$, we will use the norm induced by
\begin{equation*}
  \< f,g \>_{H^{s}_0}
:=
  \sum_{k \in \bb Z}
  \mc F_{\bb T}(f)(k)
  \overline{\mc F_{\bb T}(g)(k)}
  |k|^{4s}.
\end{equation*}
which is equivalent to the $H^s$-norm in $H^s_0$. 
We also consider the space $H^{-s}$ - the dual space of $H^{s}_0$ - with norm 
\begin{equation*}
  \|f\|_{H^{-s}}
:=
  \sum_{k \in \bb Z\setminus \{0\}}
  |\mc F_{\bb T}(f)(k)|^2
  |k|^{-4s}.
\end{equation*}

Finally, for any Hilbert space $H$ and $T>0$ , we denote by $L^2([0,T],H)$ the Hilbert space of functions $f:[0,T] \mapsto H$ with the norm
\begin{equation}\label{eq:L20T-norm}
  \|f\|_{L^2([0,T],H)}
:=
  \int_{0}^T 
  \|f(t)\|^2_H dt,
\end{equation}
where $\|.\|_H$ is the norm of $H$. 
\section{Results}\label{sec:results}
\subsection{Local central limit theorem}
In this section, we state our results regarding LCLT's for heavy-tailed i.i.d.~random variables with admissible probability distributions.

\begin{theorem} \label{thm:lclt}
Let $\alpha \in (0,2)$ and $(X_i)_{i\in \mathbb{N}}$ be a sequence of i.i.d.~random variables with admissible law $p_X(\cdot)$.
Let furthermore $p_{\bar{X}}(\cdot)$ denote the law of the $\alpha$-stable random variable satisfying Definition \ref{def:stable} to which domain of attraction belongs $X$.
Then we have that,
\begin{equation*}
	\sup_{x \in \bb Z}
	|{p}^n_X(x)- p^n_{\bar{X}}(x)|
\lesssim
	n^{- \frac{\beta_{1}+1-\alpha }{\alpha}},
\end{equation*} 
where $\beta_1=\min(J^+_{\alpha})$.
%
\end{theorem}

\begin{corollary}\label{corol:assymp-repairable}
Let $\alpha \in (0,2)$ and $(X_i)_{i\in \mathbb{N}}$ be a sequence of symmetric i.i.d.~random variables with asymptotically repairable law $p_X(\cdot)$.
Let furthermore $p_{\bar{X}}(\cdot)$ denote the law of the $\alpha$-stable
to which domain of attraction belongs $X$, then
\begin{equation*}
	\sup_{x \in \bb Z}
	|{p}^n_{X}(x)- p^n_{\bar{X} + \bar{Z}}(x)|
\lesssim
	n^{- (1+\frac{1}{\alpha})}.
\end{equation*}
\end{corollary}

The next theorem provides a more detailed expansion of the error obtained in the previous result.

\begin{theorem}\label{prop:lclt}
Let $p_X(\cdot)$ and $p_{\bar{X}}(\cdot)$ be distributions satisfying the assumptions of Theorem~\ref{thm:lclt}.
Then, there exists a collection of constants $\{C_{\beta},\beta \in J_\alpha\}$ s.t. for all $x\in \bb Z$,
      \begin{equation}
	      \left| p^n_X(x)- p^n_{\bar{X}}(x)
		  - \sum_{\beta \in J_\alpha}
	      C_\beta\frac{u_\beta\left( \frac{x}{n^{1/\alpha}} \right)}{n^{(1+\beta-\alpha)/\alpha}}
		  - \sum_{\beta \in J_\alpha}
	      C^\prime_\beta\frac{u^\prime_\beta\left( \frac{x}{n^{1/\alpha}} \right)}{n^{(1+\beta-\alpha)/\alpha}}
	       \right|
	      \lesssim
	      n^{-\frac{3}{\alpha}},
	      \label{eq:thm-lclt-beyond}
      \end{equation}
      \label{thm:lclt-beyond}
      where
      \begin{equation}\label{eq:inte}
	      u_\beta(x):=
	      \frac{1}{2\pi}
	      \int_{\bb R} 
		       |\theta|^{\beta} 
			   \phi_{\bar{X}}(\theta)
		      e^{-i x \theta }
	      d\theta
		  \text{ and }
	      u^\prime_\beta(x):=
	      \frac{1}{2\pi}
	      \int_{\bb R} 
		       \sgn(\theta) |\theta|^{\beta} 
			   \phi_{\bar{X}}(\theta)
		      e^{-i x \theta }
	      d\theta.
      \end{equation}
\end{theorem}
A careful analysis of the functions $u_\beta,u^\prime_\beta$ shows that 
\begin{equation}
	|u_\beta(x)|,|u^\prime_\beta(x)| \lesssim
	\frac{1}{|x|^{\alpha+\beta+1}}
	\label{eq:bound-uj}.
\end{equation}
Indeed, this bound is significantly weaker than its equivalent Theorem 2.3.7 in \cite{Limic10} in the case that $X$ has at least four finite moments.
There, the integrands in \eqref{eq:inte} are given by $g_\beta(\theta):=\theta^\beta e^{-c |\theta|^2}$, and therefore $g_\beta(\cdot)$ can be seen as Schwartz functions with rapidly decaying derivatives.

A simple triangular inequality leads us to the following corollary.

\begin{corollary}\label{coro}
	Under the conditions of Theorem \ref{prop:lclt}, calling
	$\beta_2 := \min ( {J}^+_{\alpha}\setminus \{\beta_1\} )$,  we have that 
\[
	\left |	p^n_X(x)- p^n_{\bar{X}}(x) \right |
 = o \left (\sum_{\beta \in J_\alpha}
		C_\beta\frac{u_\beta\left( \frac{x}{n^{1/\alpha}} \right)}{n^{(1+\beta-\alpha)/\alpha}} \right ).
\]
In particular, we have that
\begin{equation*}
\sup_{x\in \bb Z}	\left|
		p^n_X(x)- p^n_{\bar{X}}(x)
	\right|
	\lesssim
	n^{-\frac{(\beta_1+1-\alpha)}{\alpha}}
\end{equation*}
and
\begin{equation*}
	\left|
		p^n_X(x)- p^n_{\bar{X}}(x)
	\right|
	\lesssim
\left (n^{-\frac{(\beta_2+1-\alpha)}{\alpha}} \right )
	\vee
\left (	n^{2} |x|^{-(\alpha+\beta_1+1)} \right ).
\end{equation*}
\end{corollary}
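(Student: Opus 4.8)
The plan is to derive Corollary~\ref{coro} directly from Theorem~\ref{prop:lclt} by a triangle inequality, so the only real work is to control the finite sum $\sum_{j \in J_\alpha} C_j\, u_j(x/n^{1/\alpha}) n^{-(1+j-\alpha)/\alpha}$ and show its two leading behaviours. First I would record the elementary scaling bound $|u_j(y)| \lesssim \min(1, |y|^{-(\alpha+j+1)})$: the bound $|u_j(y)|\lesssim 1$ follows from $\int_{\bb R}|\theta|^j e^{-\kappa_\alpha|\theta|^\alpha}d\theta < \infty$ (which holds since $j < 2+\alpha$, so the integrand is integrable at infinity and, as $j > 0$, also at the origin), and the decay bound \eqref{eq-bound-uj} is quoted from the paper as the ``careful analysis of $u_j$''. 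Plugging $y = x/n^{1/\alpha}$ gives $|u_j(x/n^{1/\alpha})| \lesssim \min\bigl(1,\ n^{(\alpha+j+1)/\alpha}|x|^{-(\alpha+j+1)}\bigr)$, hence each term of the sum is $\lesssim n^{-(1+j-\alpha)/\alpha}$ uniformly in $x$, and is also $\lesssim n^{(\alpha+j+1)/\alpha - (1+j-\alpha)/\alpha}|x|^{-(\alpha+j+1)} = n^{2/\alpha}|x|^{-(\alpha+j+1)}$.

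Next I would observe that within the uniform bound, the dominant term as $n\to\infty$ is the one with the smallest exponent $(1+j-\alpha)/\alpha$, i.e. the smallest $j$, which is $\beta_1 = \min(J_\alpha^+)$ (including the endpoint $2+\alpha$ coming from the $\mc O(|\theta|^{2+\alpha})$ error, which by Theorem~\ref{prop:lclt} contributes at level $n^{-3/\alpha}$, consistent with $j = 2+\alpha$). Thus $\sum_{j\in J_\alpha} C_j u_j(x/n^{1/\alpha}) n^{-(1+j-\alpha)/\alpha} \lesssim n^{-(\beta_1+1-\alpha)/\alpha}$ and, adding the $n^{-3/\alpha}$ error from \eqref{eq-thm-lclt-beyond}, the first displayed inequality of the corollary follows (noting $3/\alpha \ge (\beta_1+1-\alpha)/\alpha$ since $\beta_1 \le 2+\alpha$). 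For the $o(\cdot)$ statement, the point is that the right-hand side sum is genuinely of order $n^{-(\beta_1+1-\alpha)/\alpha}$ while the error term $n^{-3/\alpha}$ is of strictly smaller order whenever $\beta_1 < 2+\alpha$; when $\beta_1 = 2+\alpha$ the set $J_\alpha$ is empty and the statement is the trivial $o$ of the error term itself. Strictly, to get a clean ``$o$'' one should note that $u_{\beta_1}$ is not identically zero, so for each fixed $x$ the main term does not vanish faster; I would phrase this as the asymptotic equivalence being an honest leading order.

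For the last, non-uniform inequality, I would split off the $j = \beta_1$ term from the rest: the remaining terms are each $\lesssim n^{-(\beta_2+1-\alpha)/\alpha}$ (uniformly in $x$), and the $n^{-3/\alpha}$ error is dominated by this since $\beta_2 \le 2+\alpha$. For the $j = \beta_1$ term I would use instead the decay form of the bound, $|C_{\beta_1} u_{\beta_1}(x/n^{1/\alpha})| n^{-(1+\beta_1-\alpha)/\alpha} \lesssim n^{2/\alpha}|x|^{-(\alpha+\beta_1+1)}$. Taking the maximum of the two contributions yields
\[
\bigl| p^n_X(x) - p^n_{\bar X}(x)\bigr| \lesssim \bigl(n^{-(\beta_2+1-\alpha)/\alpha}\bigr) \vee \bigl(n^{2/\alpha}|x|^{-(\alpha+\beta_1+1)}\bigr),
\]
as claimed. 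The main obstacle — really the only nontrivial input — is the decay estimate \eqref{eq-bound-uj} for $u_j$; everything else is bookkeeping with exponents. I would either cite the paper's promised analysis of $u_j$ or, if a self-contained argument is wanted here, integrate by parts once in \eqref{eq:inte} away from $\theta = 0$ and handle the non-smoothness of $|\theta|^j e^{-\kappa_\alpha|\theta|^\alpha}$ at the origin by a dyadic decomposition of the $\theta$-integral, which produces exactly the $|x|^{-(\alpha+j+1)}$ rate from the leading singular term $|\theta|^{j}$ (noting its $\lceil \alpha+j+1\rceil$-th derivative fails to be integrable while lower ones are, so oscillatory cancellation gives that many powers of $|x|^{-1}$).
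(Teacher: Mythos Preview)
Your approach is exactly what the paper indicates—the proof in the paper is literally the single sentence ``a simple triangular inequality leads us to the following corollary,'' and you carry this out by combining \eqref{eq-thm-lclt-beyond} with the uniform bound $|u_j|\lesssim 1$ and the decay bound \eqref{eq-bound-uj}. One small arithmetic slip: in your line $n^{(\alpha+j+1)/\alpha - (1+j-\alpha)/\alpha} = n^{2/\alpha}$, the exponent actually simplifies to $2$, not $2/\alpha$ (the corollary as printed appears to carry the same typo).
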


Note that from Corollary \ref{coro} we can deduce that the rate of convergence given in Theorem~\ref{thm:lclt} is optimal.
Indeed, for any $\alpha \in (0,2)$, consider $\beta_1 \in (\alpha,2 \wedge 2\alpha)$, using  Lemma~\ref{lem:closed-by-operations} and Proposition~\ref{prop:asymp-phi-alpha-asymmetric} for $\alpha$ and $\beta_1$ we have that $p_X(\cdot):=p_{\alpha}*p_{\beta_1}(\cdot)$ is admissible with index $\alpha$ and regularity set $\{\beta_1, 2\}$.
Applying Corollary~\ref{coro}, we have that the error term in Theorem~\ref{prop:lclt} at $x=0$ is given by
\begin{equation*}
	u_{\beta_1}(0)n^{-\gamma}+ o \left(n^{-\gamma}\right)
\end{equation*}
where $\gamma =\frac{\beta_1 + 1-\alpha}{\alpha}$, and $u_{\beta_1}(0)>0$.

Notice that this reinforces this idea that repairing improves the convergence behaviour. 
If $p_X(\cdot)$ is repaired, then  $\beta_1 =  \min \{ 2\alpha, 2+\alpha \} = 2\alpha \geq 2$ which leads to $\gamma = \frac{\alpha+1}{\alpha}$.
For $\alpha \geq 1$ and $p_X(\cdot)$ is locally repairable we have that $\beta_1=\min \{2, 2\alpha, 2+\alpha \}=2$.
Without repairing, the best uniform bound we can get is 
\begin{equation*}
	\left|
		p^n_X(x)- p^n_{\bar{X}}(x)
	\right|
	\lesssim
	n^{1-3/\alpha},
\end{equation*}
which is much weaker than the bound in Theorem \ref{thm:lclt}, especially for $\alpha$ close to 2.
Theorem \ref{thm:lclt} states that repairing a probability distribution preserves the convergence rates.
Note that for $\alpha < 1$, we have that  $\beta_1 < 2$ so repairing will not provide better convergence bounds beyond the one in Corollary \ref{coro}.

In Section \ref{sec:general}, we discuss how one could potentially repair a distribution using heavy-tailed random variables instead of random variables with finite variance.

\subsection{Potential kernel estimates for long-range random walks}\label{sec:res_pot_kernel}

Theorem \ref{thm:Green} presents potential kernel estimates for long-range random walks with admissible law $p_X(\cdot)$.
It exemplifies that repairing distributions provides good potential kernel expansions.
This will be proven in Section \ref{sec:Green}.
Note that the results in this section hold for $\alpha \in (2/3,2)$.
For further considerations on $\alpha \le  2/3$, we refer to Section \ref{sec:general}.
Unfortunately, as our techniques require several cancellations, we are only able to proceed in the symmetrical case.

We will first treat the case $\alpha \in (1,2)$ and $\alpha=1$ for the subclass described by $R_{\alpha}\in \{ \varnothing, \{2\}\}$  separately.
To start, we give bounds for repaired distributions when $\alpha \in (1,2)$, where we have an expansion up to some vanishing error as $|x|\rightarrow \infty$.
After that we compute all terms of the expansion for locally and asymptotically repairable distributions up to order $\mc O (1)$.
Then, we present the general admissible case, in which we obtain the first and second terms of the expansion which will depend on $\delta:=\min(R_\alpha)$.
Finally, we look at the case $\alpha \in (2/3,1)$. 

\begin{theorem}\label{thm:Green}
Let $\alpha \in (1,2)$ and $(X_i)_{i\in \mathbb{N}}$ be a sequence of i.i.d.~random variables with common admissible distribution $p_X(\cdot)$ with index $\alpha$ and regularity set $R_{\alpha}\in \{ \varnothing, \{2\}\}$. 
\begin{itemize}
\item[(i)] Assume that $p_X(\cdot)$ is repaired, then there exist constants
  $C_{0}, C_{\alpha} \in \bb R$ such that
	\[
		a_X(x)=    C_\alpha |x|^{\alpha-1}  + C_0+
		\mc O(|x|^{\frac{\alpha-2}{3}+})
	\]
	as $x \to \infty$, where
	\[
		C_\alpha = \frac{1}{\pi \mu_\alpha} 
		\int_{0}^\infty \frac{\cos (\theta)-1}{\theta^\alpha}d\theta
	\]
	and 
	\[
		C_{0}
	= - \frac{\pi^{1-\alpha}}{2 \pi \mu_\alpha(\alpha-1)}+ 
		\frac{1}{\pi} \int_{0}^{\pi}
		\frac{\phi_X (\theta) -\left(  1-\mu_\alpha \theta^\alpha \right)}{ \mu_\alpha \theta^\alpha (1- \phi_X(\theta))}
		d\theta.
	\]
\item[(ii)] Assume that  $p_X(\cdot)$ is locally or asymptotically repairable. Let $m_\alpha:= \lceil \frac{\alpha-1}{2-\alpha}\rceil-1$, then there
	exist constants $C^\prime_0,C_1,\dots,C_{m_\alpha+1}$ such that 
	\[
		a_X(x)= C_\alpha |x|^{\alpha-1} 
		+
		\sum_{m=1}^{m_\alpha} C_m |x|^{(\alpha-1) - m (2-\alpha)}
		+
		C^\prime_0 \log{|x|}
		+
		\mc O(1)
	\]
	as $|x| \to \infty$, where for $1 \le m \le m_\alpha+1$ 
	\[
		C_m := \frac{\mu_2^m}{\pi \mu_\alpha^{m+1}}
		\int_{0}^\infty \theta^{m(2-\alpha)-\alpha} (\cos(\theta)-1 ) d\theta,
	\]
	and the sum is zero if $m_\alpha =0$. Moreover,
	\begin{equation*}
		C^\prime_0:= 
		\begin{cases}
			0,& \text{if } \frac{2}{2-\alpha} \not \in \bb N \\
			C_{m_\alpha+1},& \text{if } \frac{2}{2-\alpha} \in \bb N.
		\end{cases}
	\end{equation*}
\end{itemize}
\end{theorem}

Note that $m_\alpha \rightarrow \infty$ as $\alpha \rightarrow 2$, 
therefore, repairing (whenever possible) becomes more relevant for values of $\alpha$ close to $2$.
The following theorem treats the general admissible case.

\begin{theorem}\label{thm:Green-adm}
Let $\alpha \in (1,2)$ and $(X_i)_{i\in \mathbb{N}}$ be a sequence of i.i.d.~random variables with common admissible distribution $p_X(\cdot)$ with index $\alpha$ and regularity set $R_\alpha$. Let $\delta := \min( R_\alpha )$ and
	\[
		C_\alpha = \frac{1}{\pi \mu_\alpha} 
		\int_{0}^\infty \frac{ \cos (\theta) -1}{\theta^\alpha}d\theta.
	\]
\begin{itemize}
\item[(i)] If $\delta < 2\alpha-1$, then there 
	exists a  constant $C_{\delta}$ such that 
	\[
		a_X(x)= C_\alpha |x|^{\alpha-1} 
		+
		C_{\delta} |x|^{2\alpha-\delta-1}
		+ \mc O (|x|^{2\alpha-\delta-1})
	\]
	as $|x| \to \infty$, where 
	\[
		C_{\delta}= \frac{\mu_{\delta}}{\pi \mu_\alpha}
		\int_{0}^\infty \theta^{\delta-2\alpha} (\cos(\theta) -1) d\theta.
	\]
\item[(ii)] If $\delta > 2\alpha-1$, then there 
	exists a  constant $C_0$ such that
	\[
		a_X(x)=   C_\alpha |x|^{\alpha-1} 
		+C_0+
		o(1)
	\]
	as $x \to \infty$, where
	\[
		C_{0}
	= -\frac{\pi^{1-\alpha}}{2 \pi \mu_\alpha(\alpha-1)}+ 
		\frac{1}{\pi} \int_{0}^{\pi}
		\frac{\phi_X (\theta) -\left(  1-\mu_\alpha \theta^\alpha \right)}{ \mu_\alpha \theta^\alpha (1- \phi_X(\theta))}
		d\theta.
	\]
\item[(iii)] If $\delta = 2\alpha-1$, then there 
	exists a constant $C_{\delta}$ such that
	\[
		a_X(x)=  C_\alpha |x|^{\alpha-1}  +
		C_{\delta}\log |x| + \mc O (1)
	\]
	as $x \to \infty$, where 
	\[
		C_{\delta}:= \frac{\mu_{\delta}}{\pi \mu_\alpha} \int_{0}^\pi \frac{\cos(\theta)-1}{\theta} d\theta 
	\]
	\end{itemize}
\end{theorem}

Finally, we include the result for the potential kernel for $\alpha=1$,
when $R_{\alpha}\in \{ \varnothing, \{2\}\}$.

\begin{theorem}	\label{thm:green-1}
	Let $\alpha =1$ and $(X_i)_{i\in \mathbb{N}}$ be a sequence of i.i.d.~random variables with common admissible law $p_X(\cdot)$ and $R_{\alpha}\in \{ \varnothing, \{2\}\}$.
	Then
	\[
		a_{X}(x) = - \frac{1}{\pi \mu_1} \log(|x|)  
		+ C_0 +
		o \left( 1 \right),
	\]
	where 
	\[
		C_0:= 
		\frac{\gamma + \log (\pi)}{\pi\mu_1}
		+
		\frac{1}{\pi}
		\int_{0}^{\pi} \left(  \frac{1}{1-\phi_X(\theta) } - \frac{1}{\mu_1 |\theta|} \right)d\theta
	\]
	and  $\gamma$ is the Euler-Mascheroni constant.
	Additionally, if $p_X(\cdot)$ is repaired, we have that the term $o (1)$ is in 
	fact of order $\mc O \left( |x|^{-\frac{1}{3}+} \right)$.
\end{theorem}

\begin{theorem}	\label{thm:green-less1}
	Let $\alpha \in (2/3,1)$ and $(X_i)_{i\in \mathbb{N}}$ be a sequence of i.i.d.~random variables with common repaired law $p_X(\cdot)$ and $R_{\alpha} = \varnothing$.
	Then
	\[
		g_{X}(x) = C_\alpha |x|^{\alpha-1} +
		\mc	O (|x|^{-\alpha \frac{2-\alpha}{2+\alpha}-}).
	\]
\end{theorem}

\subsection{Fluctuations of discrete stochastic PDEs}
\label{sec:fluctuations-of-scaling-limits-of-discrete-stochastic-pdes}
As an application of our results we want to study second-order fluctuations of fractional Gaussian fields in this section.

We will work on the torus $\bb T=(-\pi,\pi]$. 
To keep the definitions consistent to the literature on fractional Gaussian fields, in this section, we will restrict ourselves to random walks with symmetric transition probabilities.

Let $\Xi^s=(\Xi^s(x))_{x\in \bb T}$ denote the so-called fractional Gaussian field of order $s \in \bb R$ (or $s$-FGF for short), on the torus, i.e, the solution of the elliptic equation 
\begin{equation}\label{eq:def-continuous-field}
  \begin{cases}
  -(-\Delta)^{s/2}_{\bb T}\Xi_{s}(x) = \xi(x) - \< \xi, 1 \> & \text{ if } x \in \bb T \\
  \phantom{-} \int_{\bb T} \Xi_{s}(x) dx =0,
  \end{cases} 
\end{equation}
where $\xi=(\xi(x))_{x\in \bb T}$ is the white-noise defined as $\<\xi, f\> \sim \mathcal{N}(0,\|f\|^2_{L^2(\bb T)})$ for $f\in C^{\infty}(\bb T)$, and $-(-\Delta)_{\bb T}^{s/2}$ is defined as the fractional Laplacian on the torus in terms of the Fourier transform, that is,
\begin{equation*}
  \mc F_{\bb T}( -(-\Delta)^{s/2}_{\bb T} f)(\theta)
  =
  \mc F_{\bb T}(f)(\theta)|\theta|^{2s},
\end{equation*}
for a given function $f \in C^\infty(\bb T)$.
We can then extend this operator to any distribution on $\bb T$ via duality. 
For more information on fractional Gaussian fields in a more general context, we suggest \cite{Lodhia2016}. 

We will compare this field with its discrete counterpart.
To do so, let $p_X(\cdot)$ be the transition probability of a symmetric admissible random walk on $\bb Z$ with index $\alpha$ , therefore the constants $\mu_\beta^\prime$ vanish.
In particular, let us write the characteristic	function of $X$ as 
\begin{equation}\label{eq:char-func-for-fields}
  \phi_X(\theta)=1-\mu_\alpha |\theta|^\alpha+ \mu_\beta |\theta|^\beta +\mc O(|\theta|^\gamma),
\end{equation}
for $\alpha, \beta, \gamma$ satisfying
\begin{equation}\label{eq:range-field}
  \alpha \in (0,2], \alpha < \beta < \gamma \le 2+\alpha,\text{  and }\beta <\alpha +1. 
\end{equation}

We embed such distributions on the discrete torus $\bb T_m:= 2\pi \bb  Z_m = 2\pi((-m/2,m/2]\cap \bb Z)$ by defining
\begin{equation}\label{eq:periodisation}
  p^{(m)}(x,y) = p_{X,m}(0,y-x):= \sum_{z \in \bb Z} p_X\left(\frac{y-x + 2\pi m z}{2\pi}\right),
\end{equation}
for $x,y \in \bb T_m$.
From this, we construct $\mc L^{m}$ the generator of the random walk, that is,
\begin{equation}\label{eq:def-generator}
  \mc L^m f(x): = m^\alpha\sum_{y \in \bb T_m} p^{(m)}(x,y) (f(y)-f(x)),
\end{equation}
where the factor $m^\alpha$ is just the scaling required to have $\mc L^{m}$ converge to $-\mu_\alpha (-\Delta)_{\bb T}^{\alpha/2}$ as $m\rightarrow
\infty$. 

Now, for a given $m \ge 1$,  we define the discrete fractional Gaussian field $h^m=(h^m(x))_{x\in \bb T_m}$ as the solution to 
\begin{equation}\label{eq:def-discrete-field-1} 
  \begin{cases}
  \mc L^{m}h^m(x) = \xi^m(x) - \frac{1}{m}\sum_{z \in \bb T_m} \xi^m (z), &  \text{ if } x\in \bb T_m \\
  \sum_{x \in \bb T_m}h^m(x)=0,
  \end{cases} 
\end{equation}
where $\{\xi^m(x)\}_{x \in \bb T_m}$ is a collection of i.i.d. $\mc N(0,1)$-random variables. 
We can then associate a distribution $\Xi^m$ to $h^m$ on $\bb T_m$ by
\begin{equation}\label{eq:def-discrete-field-2} 
  \Xi^m:= m^{-1/2}\sum_{x \in \bb T_m} \delta_x h^m(x),
\end{equation}
where $\delta_x$ denotes the Dirac's delta function.

\begin{theorem}\label{thm:converge-of-fields-elip}
Let $h^m$ be the solution of \eqref{eq:def-discrete-field-1}, $\Xi^m$ defined in \eqref{eq:def-discrete-field-2} and the parameters $\alpha, \beta, \gamma$ satisfying \eqref{eq:range-field}. 
Then, we can couple $\Xi^m$ and $\Xi_\alpha$ in such a way that 
\begin{equation}\label{eq:thm-conv-fields}
 m^{\beta-\alpha}  \frac{(\mu_\alpha)^2}{\mu_\beta}
 \left(\Xi^m -  \frac{1}{\mu_\alpha}\Xi_\alpha\right)
 \longrightarrow 
 \Xi_{2\alpha-\beta}
 \text{ in probability in }
 H^{-s}(\bb T),
\end{equation}
for all $s > s_0:= \max\{(3+2\alpha)/4, (\alpha-\beta)/2 +  (\beta-\alpha)\vee (\gamma-\beta)\}$ as $m\rightarrow \infty$.
\end{theorem}
Similarly, we will describe the parabolic counterpart of the result above.
Consider $(\zeta^m(t,x))_{t\geq 0, x\in \bb T_m}$ to be the solution of the discrete stochastic heat equation (SHE)
\begin{equation}\label{eq:parabolic-discrete}
  \begin{cases}
    d\zeta^m(t,x) 
	=
	\mc L^{m} \zeta^m(t,x)  dt + d\xi^m(t,x),
& t>0, x\in \bb T_m \\
\zeta^m(0,\cdot):=\zeta^m_0(x):= Z_0 \mid_{\bb T_m}, & t=0
  \end{cases} 
\end{equation}
where $(\xi^m(\cdot,x))_{x \in \bb T_m}$ is a family of independent Brownian motions on $\bb T_m$, $Z_0(x)$ is a smooth function in the continuous torus, and $Z_0 \mid_{\bb T_m}$ is the restriction to the discretized torus $\bb T_m$.

Again, we compare it with its continuous counterpart given by
\begin{equation}\label{eq:parabolic-conti}
  \begin{cases}
    dZ_\alpha(t,x) 
	=
	-\mu_\alpha(-\Delta)^{\alpha/2}Z_\alpha(t,x)dt + d\xi(t,x),
& t>0, x\in \bb T \\
Z_\alpha(0,\cdot):=Z_0(x), & t=0
  \end{cases} 
\end{equation}
which satisfies 
\begin{align*}
 \hat{Z}_\alpha(t,k) 
:=
\hat{Z}_0(k)e^{-\mu_\alpha|k|^\alpha t}
+
 \int_{0}^t e^{-\mu_\alpha|k|^\alpha (t-s)}  
 \hat{\xi}(ds,k),
\end{align*}
where this Fourier transform is only taken in the space coordinates. Consider the field
\begin{equation}\label{eq:parabolic-discrete-field}
  Z^m(t,\cdot) 
  =
  m^{-1/2}
  \sum_{x \in \bb T_m}
  \zeta^m(t,x)\delta_x.
\end{equation}
Then, the following theorem holds.

\begin{theorem}\label{thm:converge-of-fields-parab}
Let $\zeta^m$ solve \eqref{eq:parabolic-discrete} and $Z^m$ be defined in Equation \eqref{eq:parabolic-discrete-field}. 
Then, we can couple $Z^m$ and $Z_\alpha$ in such a way that for every $T>0$, 
\begin{equation}\label{eq:conv-fields-parab}
 m^{\beta-\alpha} 
 \left(
	Z^m -  Z_{\alpha}
 \right)
 \longrightarrow 
 Z_{Err}
 \text{ in probability in }
 L^2([0,T],H^{-s})
\end{equation}
for all $s > s_0:= \max \{2\beta-\alpha,\gamma-\alpha\}$ as $m\rightarrow \infty$, and where the field $Z_{Err}$ can be characterised by its Fourier coefficients
\begin{equation}\label{eq:Fourier-of-parab-error}
  \hat{Z}_{Err}(t,k):=
  -\mu_\beta|k|^\beta\hat{Z}_0(k)e^{-\mu_\alpha |k|^\alpha t}t
 - \mu_\beta\int_{0}^t e^{-\mu_\alpha |k|^\alpha (t-s)} |k|^\beta (t-s)\hat{\xi}(ds,k), 
\end{equation}
and
\begin{equation*}
  \hat{Z}_{Err}(t,0)\equiv 0,
\end{equation*}
where for each $k \in \bb Z \setminus \{0\}$, $\xi(\cdot,k)$ is an i.i.d. Brownian motion.
\end{theorem}

Notice that this is \textbf{not} the solution of a linear stochastic differential equation.
In fact, $Z_{Err}(t,\cdot)\to 0$ as $t\to 0$ for any initial condition and but the initial condition.
However, the initial condition has has a non-trivial influence over $Z_{Err}(t,\cdot)$, showing that the characterisation of the fluctuations is not simple.

A natural follow up question is the fluctuations around non-linear equations. We will discuss ideas that could be used to analyse such cases in the next section.

\begin{remark}
To evaluate fluctuations, we used the norm $H^{-s}$ on the space variable, which is not sensitive to the $0$-th Fourier coefficient of the test function.
However, notice that due to our choice of discretisation, $\widehat{Z}^n(t,0)\equiv\hat{Z}(t,0)$ for every $n$, and therefore $Z_{Err}(t,0)\equiv 0$.
A similar consideration applies to the elliptic case.
\end{remark} 


\section{Discussion and generalisations of our results} \label{sec:general}

In this section we quickly discuss possible generalisations and limitations of our results and techniques.
\subsection*{Admissible distributions, regularity sets $R_{\alpha}$ and error terms}

As mentioned in the introduction, the order $2+\alpha$ is chosen because it appears naturally in the examples we study, see Section~\ref{sec:example}.
In order to just obtain sharp convergence rates of the LCLT, expansions up to an error term of order $\mc O (|\theta|^{2\alpha})$ are enough.
Analogously, all of our other results benefit from the further order terms.
Regarding the potential kernel estimates in Section \ref{sec:res_pot_kernel}, choosing an error of order $\mc O(|\theta|^{2+\alpha})$ improves the expansion compared to choosing $\mc O (|\theta|^{2\alpha})$.
Furthermore, if we assumed Edgeworth expansions to orders beyond $2+\alpha$, we would also be able to generalise our results.

Remark that random variables with support in $\bb Z$ and finite fourth moment have an admissible distribution with index $\alpha=2$ and $R_{\alpha} \subset \{1,2,3\}$.
Both LCLT and potential kernel estimates for such random variables are well understood, see \cite{Limic10}.
For this reason, we concentrate on the case $\alpha \in (0,2)$.

The class of admissible probability distributions is closed under natural operations such as convex sum and convolution, see Lemma~\ref{lem:closed-by-operations}.

\subsection*{Higher dimensions and asymmetric Green's functions}

We can use a similar approach to the one used here - together with the multidimensional version of Euler-Maclaurin (see \cite{karshon2003euler}) and the Faà di Bruno's formula - to show that the random variable $X$ with distribution $p_X(x)=c_{\alpha,d}|x|^{-(d+\alpha)} \1_{x \neq 0}$ admits a fractional Edgeworth expansion for any dimension $d$.

Although we can obtain such expansions, we are not able to use the same analysis to control the signs of the constants $\mu_{1+\alpha}$ and $\mu_{2}$ that appear in the expansion, and therefore not able to use the techniques given here to improve rates of convergence and other results.
However, both the LCLT and fluctuations results can be generalised to this case with almost no additional changes.

Unfortunately, our results do not generalise to Green's function estimates for $d\ge 2$ and $\alpha \in (0,2)$ without further assumptions on the degree of continuity of the remainder of the function $\tilde{h}_X(\cdot)$. We would need to guarantee that the remainder would decay faster than $\|x\|^{\alpha-d}$, which is the first order term.

The same limitation applies to $\alpha <2/3$ and $d=1$, the degree of continuity of $\phi_X(\cdot)$ becomes too low to guarantee that its Fourier transform will decay faster than $|x|^{\alpha-1}$.

One could try to expand ideas from the proof of Theorem 1.4 in \cite{Frometa2018} to tackle the $d\geq 2$ and/or $\alpha < 1$ case.
There the authors demonstrate a detailed expansion for the Green's function in $d=2$, $\alpha \in (0,2)$ for a truncated long-range random walk.

Regarding adding asymmetry in the random walk, other methods would be required as the criteria we use for evaluating smoothness of the integrands loses continuity at $\theta=0$. 


\subsection*{Further repairers}

In this article we only studied repairers for probability distributions $p_X(\cdot)$ which are $\alpha$-admissible with a regularity set $R_\alpha = \left\{ 2 \right\}$.
However, suppose that $p_X(\cdot)$ is an admissible distribution, let $\delta:=\min ( R_\alpha )$ and $\mu_{\delta}>0$ so we are in the locally repairable case.
We could define a repairer $Z$ as an admissible distribution $p_Z(\cdot)$ with index $\delta$ whose leading coefficient  in the expansion of the characteristic function of $X$ is equal to the negative value of the coefficient $\mu_{\delta}$ multiplying $|\theta|^{\delta}$ in the expansion of the characteristic function of $Z$.

Then, $\min (R'_{\alpha} ) >\delta$, where $R'_{\alpha}$ is the regularity set of $X+Z$.
Hence repairing would allow to obtain more precise estimates on its potential kernel beyond the constant order of the error.
A similar idea could be used to improve the rates of convergence in the LCLT for distributions such that $\min (R_{\alpha}) < 2\alpha$, by performing multiple repairs to cancel each of the terms in $r_X(\theta)$.

However, if the constant $\mu_\delta$ is negative, one can also elaborate a similar notion for asymptotic repairer.
Again, in the same spirit of adapting the Lidenberg principle but matching the fictional moments/fractional cumulants.

\subsection*{Non-lattice walks/ Random variables in $\bb R$}

We believe that a combination of the ideas of the present paper and \cite{stone1965local} would be enough to prove our results in the context of non-lattice walks and absolutely continuous random variables, possibly depending on a further integrability assumption over the characteristic function.
However, we cannot say the same about potential kernel estimates.
Here we are relying on the fact that smoothness implies decay of the Fourier coefficients on the torus.
This relation fails in the infinite plane as the functions in the integrand are not smooth at zero.

\subsection*{Discrete fractional fields and fluctuations of discrete stochastic heat equations}
In \cite{Cipriani2016, chiarini2021odometer}, the authors proved that, after suitable rescaling, the discrete fractional Gaussian fields, respectively driven by the simple random walk and by random walk with a power-law decay, converge in distribution to their suitable continuous counterparts.
In fact, they proved it in any dimension and only assuming that the family $\{\xi^{m}(x)\}_{x \in \bb T_m}$ is i.i.d. (not necessarily normal) with finite variance.
Theorem \ref{thm:converge-of-fields-elip} shows that the fluctuations around such fields happen on the scale $m^{\alpha-\beta}$.
We can also characterise this fluctuation as another fractional Gaussian field of parameter $2\alpha-\beta$.
Notice that the exponent $2\alpha-\beta$ matches precisely the second order term in the heat-kernel expansion given in Theorem~\ref{thm:Green-adm}.

This leads us to think that similar results should hold in the whole space (rather than the torus). The reason for which we chose the torus is that the field is well-defined for any $\alpha \in (0,2)$ for any dimension $d \ge 1$ in the discrete torus; whereas the equivalent discrete fractional Gaussian on the full space only exists if $d >\alpha$, a regime for which our Green's function bounds are not strong enough to identify the correct order of fluctuations.

The assumption in Theorem \ref{thm:converge-of-fields-elip} of $\beta<\alpha+1$ is due to the fact that 
\begin{equation*}
   m^{\beta-\alpha} 
   \left\<\xi,
	 {\bf e}_k(\cdot)  
	 -
	 \sum_{x \in \bb T_m}{\bf e}_k(x)\1_{B_m(x)}(\cdot)
	 \right\>
\end{equation*}
has variance of order $\mc O\left(m^{2(\beta-\alpha-1)}\right)$ as $m \to \infty$ for each fixed $k$.
One can push an expansion past this restriction by renormalising the discrete fields. 
This renormalised version should incorporate Taylor polynomials of higher orders in the discretised noise.
After that, one can then follow our strategy to extend the proof.

In fact, assuming a full expansion  of the characteristic function $\phi_X$ (say for instance in the case of the simple random walk) would allow us to reiterate this argument and write a discrete fractional Gaussian field as a (possibly infinite) series of continuous Gaussian fields of decreasing regularity.
Again, this reminds us of ideas used in Hairer's decomposition in terms of regularity. 
Perhaps a easier analogy is a Taylor expansion: where each term describes variations of smaller amplitudes, but the term itself (the derivative in the case of Taylor expansion) becomes more irregular as we look at further terms into the expansion.

\subsection*{Global in time fluctuations}
One can easily improve the result in Theorem~\ref{thm:converge-of-fields-parab} to unbounded time intervals $[0,\infty)$ by introducing a weight density $\omega: [0,\infty) \to [0,\infty)$ satisfying $\int_{0}^{\infty} \omega(t) \max\{t^{2},t^3\}dt < \infty$ and looking at the space $L^2_\omega([0,\infty),H^{-s})$.
This decay is chosen to guarantee that the equivalent quantities to the norms of $A_m(t,k)$ and $B_m(t,k)$ (according to the notation of the proof of Theorem~\ref{thm:converge-of-fields-parab}) remain finite when we integrate $t$ over $[0,\infty)$. 

\subsection*{Fluctuations around solutions of non-linear SPDEs}

A interesting topic would be to study the fluctuations of discrete non-linear SPDEs.
A very informal idea is to use the da Prato-Debusche argument \cite{da2003strong}.
Consider the simple equation 
\begin{equation*}
  -(-\Delta)^{\alpha/2}X_\alpha
=
  \eta X^2_\alpha
  +\xi
\end{equation*}
in the torus, where $\xi$ is the space white-noise and $\eta \in  \bb R$. 
Depending on the value of $\alpha$, this equation is not well-posed, since we do not expect $X$ to be a function, but only a distribution.
However, we can try to write $X_\alpha = \Xi_\alpha + v_\alpha$ where $\Xi_\alpha$ is like in \eqref{eq:def-continuous-field}, as $\Xi_\alpha$ is dealing with the irregularity of the noise. We notice that $v_\alpha$ satisfies the formal equation
\begin{equation*}
  -(-\Delta)^{\alpha/2}v_\alpha
=
  \eta(\Xi_\alpha)^2
  +
  2\eta \Xi_\alpha v_\alpha
  +\eta v_\alpha^2
\end{equation*}
which we expect to be more regular as long as we manage to deal with the quadratic term, which can be given a formal meaning via Wick renormalisation.
And the equation has a meaning as long as $\eta$ is small enough and $\alpha$ is large enough to perform a fixed point argument.

One could follow the same idea for
\begin{equation*}
\mc L^m X_m
=
  \eta X_m^2
  +
  \xi^m,
\end{equation*}
where we take $\xi^m$ as i.i.d. random variables obtained by the same coupling we use later in this article, here we are taking $\mu_\alpha=1$ to simplify the exposition.
By decomposing $X_m = \Xi^m + v^m$, using Theorem~\ref{thm:converge-of-fields-elip}, we expect that we can also prove that $m^{\beta-\alpha}(v_m-v)$ converges to the solution of a PDE of the form
\begin{equation*}
  -(\Delta)^{\alpha/2} v_{Err}
  =
  \eta \Xi_\alpha  \cdot \Xi_{2\alpha-\beta}
  +
  (2 \eta v_\alpha + U + 2 \eta \Xi_\alpha )
  v_{Err},
\end{equation*}
after suitable renormalisation, where $U$ is an operator related to the functions $u_\beta$ showing in Theorem~\ref{prop:lclt}.
This equation is solvable via the da Prato-Debusche argument as long as $\alpha-\beta$ is sufficiently small.
As mentioned above, for higher values of $\alpha-\beta$, one needs to add derivatives of the white-noise, making the equation too irregular.
However, we expect this could still be solvable via more modern techniques of SPDEs.
Similar ideas hold for the parabolic version, and for higher dimensions.
We intend to study those results in future articles.

\section{Class of admissible and repairable distributions}\label{sec:example}

In this section, we will discuss a few examples and an explicit construction of admissible probability distributions with index $\alpha \in (0,2)$. 

\subsection{Basic properties of admissible distributions}\label{subsec:previously-known-admissible-distributions}

We start by stating simple properties of admissible distributions which will be useful the \textit{repairing process} of distributions.
\begin{lemma}\label{lem:closed-by-operations}
Let $p_{X_1}(\cdot)$ and $p_{X_2}(\cdot)$ be admissible distributions of independent random variables $X_1$ and $X_2$ with indexes $\alpha_1,\alpha_2\in (0,2]$, $\alpha_1\le \alpha_2$ and regularity sets $R_{\alpha_1}$, $R_{\alpha_2}$ respectively.
We have that their convolution,
\[
	p_X (x):=p_{X_1}*p_{X_2}(x)
\]
is admissible with index $\alpha_1$ and regularity set 
\[
	R^\prime_{\alpha_1} \subset (R_{\alpha_1}+R^+_{\alpha_2}) \cap(0,2+\alpha_1).
\]
Let  $\tilde{X}:=UX_1+(1-U)X_2$ where $U$ is a Bernoulli r.v.~with parameter $q \in [0,1]$, independent from $X_1$ and $X_2$, with distribution $p_{\tilde{X}}(\cdot)$.
We have that 
\begin{equation}\label{eq:alt-repair}
	p_{\tilde{X}}(x):=q\cdot p_{X_1}(x)+(1-q)p_{X_2}(x)
\end{equation}
is admissible with index $\alpha_1$ (for each $q$) and regularity set
\[
	R^*_\alpha \subset \Big (R_{\alpha_1} \cup R_{\alpha_2}^+ \Big)\cap(0,2+\alpha_1).
\]
\end{lemma}
\begin{proof}
This follows from the relations $\phi_X(\theta)=	\phi_{X_1}(\theta)\cdot	\phi_{X_2}(\theta)$ and $\phi_{\tilde{X}}(\theta)=	q\phi_{X_1}(\theta) + (1-q)	\phi_{X_2}(\theta)$.
\end{proof}

We can only describe the regularity sets as subsets since there might be cancellations due to the convolution or convex combinations.

\subsection{Old and new examples of admissible distributions}\label{subsec:explicit-examples-of-admissible-distributions}

Before we look for simpler examples, let us point out, that the result given by \cite{christoph1984asymptotic} in which they compute the characteristic function for certain random walks in the domain of attraction of a stable distribution, is covered by our class of admissible distributions.
\begin{proposition}\label{lem:Christoph}
Let $p$ be given by \eqref{eq:dist-from-integral}, then $p$ is admissible and its regularity set $R_\alpha$ satisfies $R_\alpha \subset \bb N \cap (0,2+\alpha)$ 
\end{proposition}
\begin{proof}
	This follows from \cite{christoph1984asymptotic} Example~2.15 and Theorem 2.22.
\end{proof}

The next result will be used in a constructive proof of the existence of admissible distributions in Proposition~\ref{prop:adm-are-enough}).
It is worth mentioning that we could simplify the proof if we were not interested in the signs of the constants, in particular of $\mu_2$. 

\begin{proposition}\label{prop:asymp-phi-alpha-asymmetric} 
Let $\alpha \in (0,2)\setminus\{1\}$ and define
\begin{equation}\label{eq:palpha-asymmetric} 
	p_{\alpha,+}(x) := 
	\frac{c^+_\alpha}{x^{1+\alpha}} \1_{\{x>0\}}
\end{equation}
where $c^{+}_\alpha=1/\zeta(1+\alpha) $ is the normalising constant and $\zeta$ the zeta-function.
Then the distribution $p_{\alpha,+}(\cdot)$ is
admissible with index $\alpha$ and the characteristic function for  the corresponding random variable $X$ satisfies
\begin{equation}\label{eq:char-alpha+}
	\phi_{\alpha,+}(\theta)	= 1- \mu_\alpha|\theta|^\alpha
	+i\mu^\prime_1\theta + \mu_2\theta^2 + i\mu^\prime_3\theta^3 +\mc O(|\theta|^{2+\alpha}),
\end{equation}
where $\mu^\prime_1,\mu_2,\mu^\prime_3 \in \bb R$. 
\end{proposition}

One can easily see that the asymmetric distribution for $\alpha=1$ is not
admissible. This is because the characteristic function $\phi_{1,+}(\cdot)$ satisfies
\begin{align*}
	\phi_{1,+}(\theta)
:=
	1-\mu_1|\theta|
	+i{\mu}^\prime_{1}|\theta|\log|\theta|
	+o(|\theta|\log|\theta|) \text{ as } \theta \longrightarrow 0,
\end{align*}
for some real constants $\mu_1,{\mu}^\prime_1$.
In the symmetric case, the $\log$-term will be purely imaginary, and it disappears when summed with its complex conjugate.
Indeed, the following proposition treats the symmetrical counterpart to Proposition~\ref{prop:asymp-phi-alpha-asymmetric}. 

\begin{proposition}\label{prop:asymp-phi-alpha}
The distribution $p_\alpha$ given in \eqref{def:palpha} for $\alpha \in (0,2)$ is admissible with index $\alpha$ and locally repairable.
Let $\phi_\alpha(\theta)$ be given by 
\begin{equation}\label{eq:char-function}
	\phi_\alpha(\theta)
	=
	c_\alpha \sum_{x \in \bb Z\backslash{\{0\}} }  \frac{ e^{i x\theta}}{|x|^{1+\alpha}}.
\end{equation}
\begin{enumerate}
\item Let $\alpha \neq 1$,  then $\phi_\alpha$ satisfies
\[
  \phi_{\alpha}(\theta) = 
  1 - \mu_{\alpha}|\theta|^{\alpha} + \mu_2 |\theta|^2 
  + \mc O(|\theta|^{2+\alpha}) \text{ as } |\theta| \rightarrow 0
\]
with coefficients $\mu_{\alpha}, \mu_2$ given by
\[
  \mu_{\alpha} = -2 c_{\alpha} \cos \left ( \frac{\pi \alpha}{2}\right ) \Gamma(-\alpha)
\quad \text{ and }\quad
    \mu_2
    >0.
\]
\item In the case $\alpha=1$, we have that 
\[
    \phi_{1}(\theta) = 
    1 - \frac{3}{\pi}|\theta| + \frac{3}{2\pi^2}|\theta|^2 + 
    \mc O(|\theta|^{3}) \text{ as } |\theta| \rightarrow 0.
\] %
\end{enumerate}
\end{proposition}

We will first prove Proposition~\ref{prop:asymp-phi-alpha-asymmetric} to explain our strategy to estimate such functions.

\begin{proof}[Proof of Proposition~\ref{prop:asymp-phi-alpha-asymmetric}] 
To prove this statement, we will use the Euler-Maclaurin formula \cite{Apo}, which states that for a given smooth function
$f \in C^{\infty}(\bb R)$, we have that 
\begin{equation}\label{eq:euler-mac}
	\sum_{x=1}^M f(x) - \int_{1}^M f(x)dx =
	\frac{f(1)+f(M)}{2} + R_{\alpha, +}^M,
\end{equation}
where $M \in \bb N$ and the remainder term $R^M_{\alpha,+}$  can be computed explicitly by
\[
	R_{\alpha, +}^M = \int_1^M f^{\prime}(z)P_1(z)dz,
\]
and $P_1(x)=B_1(x-\lfloor x\rfloor)$ with $B_1(\cdot)$ being the first periodised Bernoulli function, that is: $P_1(x)=\left(x - \lfloor x \rfloor \right)- \frac{1}{2}$.
We will apply this formula to the function $f(x) = \frac{1-\exp(i\theta x)}{|x|^{1+\alpha}}$.
Using that $\phi(-\theta)=\overline{\phi(\theta)}$, we  can assume that $\theta >0$.
As we take $M \to \infty$, the  left-hand side of \eqref{eq:euler-mac} becomes
\begin{equation}\label{eq:lhs-euler-mac}
	\frac{1-\phi_{\alpha,+}(\theta)}{c_{\alpha,+}} -
	\int_{1}^\infty \frac{1-\exp (i\theta z)}{z^{1+\alpha}}dz,
\end{equation}
where $c^{+}_\alpha$ is the normalising constant used in the definition of $p^{+}_\alpha(\cdot)$.
By a change of variables $z=x \theta$ in the above integral, we get
\begin{equation}\label{eq:integral-alpha-plus}
	\frac{1-\phi_{\alpha,+}(\theta)}{2c_\alpha} -
	\theta^\alpha
	\int_{\theta}^\infty \frac{1-\exp(iz)}{z^{1+\alpha}}dz.
\end{equation}
To analyse the integral in a systematic manner, we add and subtract counter terms at the singularity $0$ (notice that this is only necessary for $\alpha>1$)
\begin{align*}
	\theta^\alpha\int_{\theta}^\infty \frac{1-\exp(iz)}{z^{1+\alpha}}dz
& =
	\theta^\alpha\int_{1}^\infty \frac{1-\exp(iz)}{z^{1+\alpha}}dz
	+
	\theta^\alpha\int_{\theta}^1 \frac{1+iz-\exp(iz)}{z^{1+\alpha}}dz
	-
	i\frac{\theta-|\theta|^\alpha}{1-\alpha} .
\end{align*}
Using a Taylor expansion of $e^{iz}$ around $z=0$, we get
\begin{align*}
	\theta^\alpha
	\int_{\theta}^\infty \frac{1-\exp(iz)}{z^{1+\alpha}}dz
&=
	C^{+}_\alpha|\theta|^\alpha
	-
	\sum_{k=1}^3 \frac{(i\theta)^k}{k!(k-\alpha)} + \mc O(\theta^4),
\end{align*}
where
\begin{align*}
	C^{+}_\alpha
&=
	- \cos\left(\frac{\pi \alpha}{2}\right)\Gamma(-\alpha)
	\left(1
	+
	i \tan\left(\frac{\pi \alpha}{2}\right) \right),
\end{align*}
the constant was evaluated by means of Mellin transform (and analytic extension for the case $\alpha >1$), finally set $\mu_{\alpha}=2c_{\alpha}C^+_{\alpha}$.

Now, we turn to the right-hand side of \eqref{eq:euler-mac}. 
Note that $f(M) \rightarrow 0$ as $M\rightarrow \infty$.
Hence
\begin{align}
	\lim_{M\rightarrow \infty} 
	\frac{f(1)+f(M)}{2} + R^M_{\alpha}
& =
	\frac{1}{2}(1-\exp(i\theta))
	+ R^\infty_{\alpha}(\theta) \nonumber
\\ &=
	-\sum_{k=1}^3 \frac{(iz)^k}{2(k!)}
	+ R^\infty_\alpha
	+\mc O(\theta^4) \label{eq:prop5.4}
\end{align}
where
\begin{align}\label{def:r-infity+}
	R^\infty_{\alpha} = -\theta^{1+\alpha}
	\int_{\theta}^\infty
	\left(\frac{iz \exp(iz)+ (1+\alpha)(1-\exp(iz) )}{z^{2+\alpha}}\right)
	P_1\Big(\frac{z}{\theta}\Big)	dz.
\end{align}
The proof now follows from Lemma~\ref{lem:ap-R-alpha+} and identifying the fictional moments from collecting the corresponding coefficients from \eqref{eq:prop5.4}. 
\end{proof}

\begin{proof}[Proof of Proposition~\ref{prop:asymp-phi-alpha} ]
We start analogously as in the previous proof.
By using \eqref{eq:euler-mac} for $f(x)=(1-\cos(\theta x))x^{-1-\alpha}$ and taking $M \to \infty$, we get
\begin{equation}\label{def:r-infity} 
	\frac{1-\phi_{\alpha}(\theta)}{c_{\alpha}} -
	\int_{1}^\infty \frac{1-\cos (\theta z)}{z^{1+\alpha}}dz
	=
	\frac{1}{2}(1-\cos(\theta))
	+
	R^\infty_\alpha
\end{equation}
where $R^\infty_\alpha$ is the Euler-Maclaurin error.
In Lemma~\ref{lem:ap-R-alpha} we will estimate this integral in more detail, there we obtain
\begin{align*}
	R^\infty_\alpha(\theta): = K_2\theta^2 
	+\mc O(\theta^{2+\alpha}),
\end{align*}
where $K_2$ is a constant depending on $\alpha$ which is defined in \eqref{eq:asymp-of-R-2}.
By setting
\begin{equation}\label{eq:def-mu2}
	\mu_2
	=
	2c_\alpha 
	\left(
		\frac{1}{2(2-\alpha)} - \frac{1}{4} - K_2
	\right)
\end{equation}
 the statement for $\alpha \neq 1$ follows from applying Lemma~\ref{lem:Right-sign}. 

For the  case $\alpha=1$ the analysis becomes much simpler.  This is because the
first order term in Equation \eqref{asymp-of-R} vanishes. Since $\alpha=1$, the terms
$\theta^{1+\alpha}$ and $\theta^2$ collapse to the same term. The normalization
constant is equal to $c_1 = \frac{1}{2\zeta(2)} = \frac{3}{\pi^2}$.

Again, using Euler-Maclaurin we get that, for $\theta >0$
\begin{equation}\label{eq:euler-mac-alpha-1}
	\frac{1-\phi_1(\theta)}{2 c _1}
		 -
	\int_{1}^\infty \frac{1-\cos(\theta x)}{x^{2}}dx
 =
	\frac{1-\cos(\theta)}{2}
	+R^\infty_1,
\end{equation}
where the remainder term will be of order
\[
	R^\infty_1 =\int_1^\infty \left( \frac{1-\cos(\theta \cdot )}{(\cdot)^{2}} \right)^\prime (x) P_p(x)dx = \mathcal{O}(\theta^3).
\]

Since
\[
	\int_{0}^\infty \frac{1-\cos(z)}{z^{2}}dz
	=
	\frac{\pi}{2},
\]
we can write
\begin{align*}
	\theta \int_\theta^\infty \frac{ 1-\cos(z) }{z^{2}}dz
	&=
	\theta \int_0^\infty \frac{1-\cos(z)}{z^{2}}dz
	-
	\theta \int_0^\theta \frac{1-\cos(z)}{z^{2}}dz
	\\ &=
	\frac{\pi}{2}\theta
	-
	\frac{1}{2}\theta^2 + \mc O (\theta^4)
\end{align*}
where in the last line we used a simple Taylor expansion.
Collecting all coefficients corresponding to the powers of $\theta$ we obtain the result.
\end{proof}

\subsection{A criterium for admissibility}\label{subsec:a-criteria-for-admissibility}

It is natural to wonder whether our techniques can be applied to examples for which the limits $\lim_{x \to \pm\infty} |x|^{1+\alpha}\cdot p_X(x)$ are not well-defined.
A very natural criterium comes from tail bounds on the cumulative distribution function,
\begin{equation}\label{eq:general-condition-1}
  1- F_{X}(x) \stackrel{x \to \infty}{=} 
  \frac{c_{+}}{|x|^{\alpha}}
  +
  \sum_{\beta \in S_\alpha}\frac{c_{\beta,+}}{|x|^{\beta}} +
  o\left(\frac{1}{|x|^{4}}\right)
\end{equation}
and
\begin{equation}\label{eq:general-condition-2}
  F_{X}(x) \stackrel{x \to -\infty}{=} \frac{c_{-}}{|x|^{\alpha}} +
  \sum_{\beta \in S_\alpha}\frac{c_{\beta,-}}{|x|^{\beta}}
  +
  o\left(\frac{1}{|x|^{4}}\right)
\end{equation}
for a finite $S_{\alpha} \subset (0,2+\alpha)$ and  two positive constants $c_+$ and $c_-$ and any arbitrary real constants 
$c_{\beta,+},c_{\beta,-}, \beta \in S_\alpha$. 

\begin{lemma}\label{lem:general-tail}
  If $\alpha \in (1,2)$, let $p_X(\cdot)$ be the distribution given by 
  \begin{equation*}
    p_X(x):=F_X(x)-F_X(x-1)
  \end{equation*}
  where $F_X(\cdot)$ satisfies estimate~\eqref{eq:general-condition-1}. Then we have that $p_X(\cdot)$ is admissible with regularity set $R_\alpha \subset (\bb Z \cup \{1+\alpha\}) \cap (0,2+\alpha).$ 
\end{lemma}

\begin{proof}
	We will concentrate on the case $\tilde{p}_\alpha(x)= \tilde{p}_\alpha(|x|)=\frac{1}{2}\frac{1}{|x|^{\alpha}}-\frac{1}{(|x|-1)^{\alpha}}$ for $x \not \in \{-1,0,1\}$ , which means that $c_+=c_-=1$ and that the error term is zero.
The non-symmetric case can be dealt with in a very similar manner.
We will comment on the presence of an error term in later.

The proof is essentially the same as in Proposition~\ref{prop:asymp-phi-alpha} plus an summation by parts.
Indeed, remember that for two sequence  $\{f_k\}_{k \ge 1}$ and $\{g_k\}_{k \ge 1}$, we have that 
for any $M \in \bb N$
\begin{equation}\label{eq:summation-by-parts}
	\sum_{k = 1}^M
	f_k [g_{k+1}-g_k] 
	= 
	f_M g_{M+1}- f_1 g_1
	-
	\sum_{k = 1}^M
	g_{k+1} [f_{k+1} - f_k].
\end{equation}
By taking $f_k=(1-\cos(\theta k))$, $g_{k}=F_X(k-1)-1$ and $M \to \infty$ , we get
\begin{align*}
	2(1-\tilde{\phi}_\alpha(\theta))
& =
	\sum_{x \in \bb N} (1-\cos(\theta x))\tilde{p}_\alpha(x)
\\&  = 
  (1-\cos(\theta))(1-F_X(1))
  +
  \sum_{x \in \bb N} [\cos(\theta(x+1))-\cos(\theta x)] (1-F_X(x))
\\&  = 
  (1-\cos(\theta))(1-F_X(1))
  +
  (\cos(\theta)-1)
  \sum_{x \in \bb N} \frac{\cos(\theta x)}{x^\alpha}
  +
  \sin(\theta) 
  \sum_{x \in \bb N}  \frac{\sin(\theta x)}{x^\alpha}.
\end{align*}
The proof now follows from analysing each of the infinite sums by using Euler-Maclaurin, just as before.
Notice that the leading term, as $\theta \to 0$  for both series is of  order $\mc O(|\theta|^{\alpha-1})$, but the terms $(1-\cos(\theta))$ and $\sin(\theta)$ help to recover the original rate.
The condition $\alpha>1$ is required in order to have the functions $\theta \mapsto \int_{[\theta,\infty)} \frac{\cos(\theta x)}{|x|^{\alpha}}dx $ and $\theta \mapsto \int_{[\theta,\infty)}  \frac{\sin(\theta x)}{|x|^{\alpha}} dx$ to be well-defined.

The error bound can be dealt with by defining 
\begin{align*}
	\phi_{E_\alpha}(\theta)
&  = 
  (1-\cos(\theta))J(1)
  +
  (\cos(\theta)-1)
  \sum_{x \in \bb N} \cos(\theta x)J(x)
  +
  \sin(\theta) 
  \sum_{x \in \bb N}  \sin(\theta x)J(x),
\end{align*}
where $J(x):=(1-F_X(x))-|x|^{-\alpha} = c_{\pm,\delta} |x|^{-\delta} + o(|x|^{-\delta}) $ for some $\delta \in (\alpha,4]$ as $|x| \to \infty$.
We can then analyse $\phi_{E_\alpha}$ using analogous estimates as we used for the first part of this proof but with $\delta$ instead of $\alpha$, notice that as we are not assuming the positiveness of the constants $c_{+,\beta}, c_{-,\beta}$, $\phi_{E_\alpha}$ may not be a characteristic function of a random variable, however, this is irrelevant for this proof.
We iterate this argument until we have exhausted the set $S_{\alpha}$, leaving us with an error of order 
\begin{align*}
	\phi_{E^*}(\theta)
&  = 
  (1-\cos(\theta))J^* (1)
  +
  (\cos(\theta)-1)
  \sum_{x \in \bb N} \cos(\theta x)J^*(x)
  +
  \sin(\theta) 
  \sum_{x \in \bb N}  \sin(\theta x)J^*(x),
\end{align*}
where $|J^*(x)| = o(|x|^{-4})$, at which point we can just differentiate the expression above in order to obtain the desired bounds. 
\end{proof}

\begin{remark}\label{rem:dom-attraction}
Remember that a distribution is in the domain of  attraction of an $\alpha$-stable distribution if, and only if,
\begin{equation*}
 \lim_{x\to \infty} (1- F_{X}(x))|x|^{\alpha} = c_{+} \in[0,\infty)
\end{equation*}
and
\begin{equation*}
 \lim_{x\to -\infty} F_{X}(x)|x|^{\alpha} = c_{-} \in [0,\infty)
\end{equation*}
with $c_++c_- > 0$, see e.g. \cite{Taqqu}.
Therefore, the condition given by Lemma~\ref{lem:general-tail} is a natural quantitative version of this characterisation.
\end{remark} 

The reason why we concentrate on the case where the density decays like a power law, (rather than the complement of the cumulative density) is that the former can be used for any $\alpha$, and also for any dimension - see Section~\ref{sec:general}. 

\subsection{Construction of admissible distributions in the domain of attraction of any given stable distribution}
\label{subsec:construction-of-admissible-distributions-in-the-domain-of-attraction-of-any-given-stable-distribution}


We end this section by giving a constructive proof of the existence of an admissible distribution in the domain of normal attraction of any given stable distribution (at least for $\alpha \neq 1$).
In fact, such a construction will be derived from power-law distributions.
It shows that the class of examples provided in this article is large. 
\begin{proposition}\label{prop:adm-are-enough}
Let $\bar{X}$ be a stable random variable with parameters $(\alpha,\beta,\gamma,\varrho)$ as in Definition \ref{def:stable}. 
%
There exists an distribution $p_X \in \mathcal{A}$ such that for the corresponding characteristic functions we have
\begin{equation}\label{eq:adm-are-enough} 
	\phi_{\alpha,\beta,\gamma,\varrho}(\theta)
	=
	\phi_{X}(\theta)
	+ o\left(|\theta|^\alpha\right).
\end{equation}

The same holds for $\alpha=1, \beta = 0$ and $\gamma \in \bb R^{+},\varrho \in \bb R$. 
\end{proposition}

\begin{proof}
The construction is simple and follows by  modifying the characteristic function in order to add one parameter, i.e.  $\beta,\gamma,\rho$, at the time. 
We will restrict ourselves to the case $\alpha \neq 1$ as it is more elaborate.

We start by defining a distribution $p_{W_1}(\cdot)$ with the correct parameters $\alpha$ and $\beta$, this can be done by choosing an appropriate convex sum of totally asymmetric random variables.
Let  $p_{\alpha,+}(\cdot)$ be the probability distribution given in Proposition~\ref{prop:asymp-phi-alpha-asymmetric} and 
define $p_{\alpha, -}(\cdot):= p_{\alpha,+}(-\cdot)$. 
Now, let $q_1= \frac{\beta+1}{2}$ and note that $q_1\in [0,1]$. 
The distribution
\begin{equation*}
	p_{W_1}:= q_1 \cdot p_{\alpha,+} + (1-q_1)\cdot p_{\alpha,-}
\end{equation*}
belongs to $\mc A$ due to Lemma~\ref{lem:closed-by-operations}.
Moreover,
\begin{equation*}
	\log(\phi_{W_1}(\theta))
	= 
	i \beta \theta -|(\mu_\alpha)^{1/\alpha} \theta|^{\alpha}\left(1-i \beta \sgn(\theta) \tan\left(\frac{\pi\alpha}{2}\right)\right) 
	+ o(|\theta|^\alpha),
\end{equation*}
due to Proposition~\ref{prop:asymp-phi-alpha-asymmetric}. 

Now we will define an admissible distribution $p_{W_2}$ with correct parameters $\alpha,\beta$ and $\gamma$.
This will be done in two steps.
First, we define $M:= \left\lceil \frac{\gamma}{(\mu_\alpha)^{1/\alpha}}\right\rceil \ge 1$ and consider 
\begin{equation*}
	p_{W^\prime_2}  
	:=
	\underbrace{p_{W_1}
		*
		\dots
		*
		p_{W_1}
	}_{M \text{ times }}.
\end{equation*}
We have $p_{W^\prime_2} \in \mc A$  and
\begin{align*}
	\log(\phi_{W^\prime_2}(\theta))
	= 
	i M \beta \mu'_1\theta -|M\cdot(\mu_\alpha)^{1/\alpha} \theta|^{\alpha}\left(1-i \beta \sgn(\theta) \tan\left(\frac{\pi\alpha}{2}\right)\right) 
	+ o(|\theta|^\alpha).
\end{align*}
Now, define	$q_2:= \frac{\gamma}{
M \cdot(\mu_\alpha)^{1/\alpha}}\in [0,1]$.
The distribution
\begin{equation*}
	p_{W_2}(x):= q_2 \cdot p_{W^\prime_2}(x)+ (1-q_2)\cdot\delta_0(x)
\end{equation*}
is admissible and satisfies
\begin{equation*}
	\log (\phi_{W_2}(\theta))
	:= 
	i \gamma^\alpha \mu_1'\beta\theta -| \gamma\theta|^{\alpha}\left(1-i \beta \sgn(\theta) \tan\left(\frac{\pi\alpha}{2}\right)\right) 
	+ o(|\theta|^\alpha),
\end{equation*}
thanks to the asymptotics of $\phi_{W_2^\prime}$ around $0$ and the Taylor expansion of the functions $z \mapsto e^z, z \mapsto \log(1+z)$ around $z=0$.   

Finally, we recover the drift parameter $\varrho$ by setting $D = \left\lceil \varrho-  q_2 M \beta \mu_1'\right\rceil$ and $q_3 \in [0,1]$ such that
\begin{align*}
	q_3 (D-1) + (1-q_3)D= \varrho-  q_2 M \beta \mu_1'.
\end{align*}
Then, it is easy to show that 
\begin{equation*}
	p_{X}:=  p_{W_2}*(q_3\cdot\delta_{D-1}+ (1-q_3)\cdot\delta_{D}) 
\end{equation*}
belongs to $\mc A$ and satisfies the relation \eqref{eq:adm-are-enough}.
\end{proof}

\section{Proofs of Local Central Limit Theorems}\label{sec:lclt}

In this section we will prove Theorems \ref{thm:lclt} and \ref{prop:lclt}. 

\begin{proof}[Proof of Theorem \ref{thm:lclt}]
We will prove cases (i) and (iii) since case (ii) is a corollary of case (i).\\
\noindent
\textbf{Case (i): $p_X(\cdot)$ asymmetric}\\
Consider $(X_i)_{i\in \mathbb{N}}$ a sequence of admissible random variables and $(\bar{X}_i)_{i\in \mathbb{N}}$ a sequence of i.i.d.  $\alpha$-stable random variables  with laws $p_X(\cdot)$ resp.~$p_{\bar{X}}(\cdot)$, where $p_X$ is in the domain of normal attraction of $p_{\bar{X}}$. 
Let $S_n =\sum_{i=1}^n X_i$ resp.~$\bar{S}_n=\sum_{i=1}^n \bar{X}_i$ with probability distributions denoted by $p^n_X(\cdot)$ resp.~$p^n_{\bar{X}}(\cdot)$.
 We want to compare the probability distributions $p^n_{X}(\cdot)$ and $p^n_{\bar{X}}(\cdot)$ using their representation in terms of inverse Fourier transforms.
More precisely we have that
	\[
		p^n_{X}(x)
		=
		\frac{1}{2 \pi}
		\int_{-\pi}^\pi \phi^n_X (\theta) e^{-i x \theta} d\theta
	\]
resp.
\[
	p^n_{\bar{X}}(x)
	=
\frac{1}{2\pi}	\int_{-\infty}^\infty
	\phi_{\bar{X}}(\theta) e^{-i \theta \cdot x} d\theta.
\]

Using a change of variable formula, we get
\[
      p^n_X(x)
      =
      \frac{1}{2 \pi n ^{1/\alpha}}
      \int_{-\pi n^{1/\alpha}}^{\pi n^{1/\alpha}}
      \phi^n_X \Bigg( \frac{\theta}{n^{1/\alpha}} \Bigg)
      e^{ -i x\frac{\theta}{ n ^{1/\alpha}}} d\theta.
\]
Given $\varepsilon>0$, notice that $\sup_{ \theta \in \bb T\setminus
[-\varepsilon,\varepsilon]}|\phi_X (\theta)|<1$, as $X$ is 
supported in $\bb Z$, see \cite[Lemma 2.3.2]{Limic10}.
To get
\[
	p^n_X(x)
	=
	\frac{1}{2 \pi n ^{1/\alpha}}
	\int_{-\varepsilon n^{1/\alpha}}^{\varepsilon n^{1/\alpha}}
	\phi^n_X \Bigg( \frac{\theta}{n^{1/\alpha}} \Bigg)
	e^{ - i x \frac{ \theta}{ n ^{1/\alpha}}} d\theta
	+ \mc O (e^{-cn})
\]
for some positive constant $c>0$. Analogously, we have that
\begin{align*}
	{p}^n_{\bar{X}}(x)
& = 	\frac{1}{ 2 \pi n^{1/\alpha}}
	\int_{-\varepsilon n^{1/\alpha}}^{\varepsilon n^{1/\alpha}} 
	\phi^n_{\bar{X}}\left(\frac{\theta}{n^{1/\alpha}}\right)
	e^{- i x \frac{ \theta}{n^{1/\alpha}}} d\theta 
 +
	\frac{1}{ 2 \pi n^{1/\alpha}}
	\int_{|\theta|> \varepsilon n^{1/\alpha}}
	\phi^n_{\bar{X}}\left(\frac{\theta}{n^{1/\alpha}}\right)
	e^{- i x \frac{\theta}{n^{1/\alpha}}} d\theta.
\end{align*}

One can easily check that,
\[
	\int_{|\theta|> \varepsilon n^{1/\alpha}}
	\phi^n_{\bar{X}}\left(\frac{\theta}{n^{1/\alpha}}\right)
	e^{-\frac{i x \theta}{n^{1/\alpha}}} d\theta
	= \mc O \left(e^{-c' n}\right),
\]
for some constant $c'>0$.
Write $\phi^n_X \Big(\frac{\theta}{n^{1/\alpha}}\Big) = [1+F_n(\theta)] \phi_{\bar{X}}(\theta)$,
we can concentrate our efforts
into bounding
\begin{align}
	\label{thm:lclt-int-to-bound}
	\int_{-\varepsilon n^{1/\alpha}}^{\varepsilon n^{1/\alpha}}
	F_n(\theta)
	\phi^n_{\bar{X}}\left(\frac{\theta}{n^{1/\alpha}}\right)
	e^{-\frac{i x \theta}{n^{1/\alpha}}} d\theta.
\end{align}
Remember the notation $\beta_1:=\min(J_\alpha)$ where $J_\alpha := \spann (R_\alpha)$.
Now, we use
\[
    |F_n(\theta)| 
	\lesssim n^{1-\frac{\beta_1}{\alpha}}|\theta|^{\beta_1}
\]
for $|\theta| < \varepsilon n^{1/\alpha}$
(possibly for smaller value of $\varepsilon$).
With this, we get
\[
\begin{split}
\left| p^n_X(x) - p^n_{\bar{X}}(x) \right| & =
	\Big |\frac{1}{ 2\pi n^{1/\alpha}}\int_{|\theta|<\varepsilon n^{1/\alpha}}
	\phi^n_{\bar{X}}\left(\frac{\theta}{n^{1/\alpha}}\right)
	F_n(\theta)d\theta\Big| + \mc O(e^{-c^\prime n})
  \\ & \lesssim
	\frac{1}{n^{\frac{\beta_1+1-\alpha}{\alpha}}} \underbrace{\int_{|\theta|<\varepsilon n^{1/\alpha}}
	 e^{- \mu_\alpha |\theta|^\alpha}
	 |\theta|^{\beta_1}
	 d\theta}_{\mc O (1)}
	+ \mc O(e^{-c^\prime n})
\end{split}
\]
and that the integral on the r.h.s.~is bounded as $n \longrightarrow \infty$.

\noindent
\textbf{Case (ii): $p_X(\cdot)$ asymptotically repairable}\\
We will prove the statement in a similar manner, so
we will only highlight the main differences. Write
\begin{align*}
	p^n_{\bar{X}+ \bar{Z}}(x)
&=
	\frac{1}{2 \pi}
	\int_{-\infty}^\infty e^{-n \mu_\alpha |\theta|^\alpha-n \mu_2 |\theta|^2}
	e^{-i x \theta} d\theta
\\&=
	\frac{1}{2 \pi n^{1/\alpha}}
	\int_{-\infty}^\infty e^{- \mu_\alpha |\theta|^\alpha-n^{(1-2/\alpha)} \mu_2 |\theta|^2}
	e^{-\frac{i x \theta}{n^{1/\alpha}} } d\theta
\end{align*}
and write $\phi^n_X \Big(\frac{\theta}{n^{1/\alpha}}\Big) =
[1+F_n(\theta)]\exp\left( - \mu_\alpha |\theta|^\alpha-n^{(1-2/\alpha)} \mu_2
|\theta|^2\right)$. Notice that $1-\frac{2}{\alpha}<0$.

One can easily check that,
\[
	\int_{|\theta|> \varepsilon n^{ 1/\alpha}}
	e^{- \mu_\alpha |\theta|^\alpha-n^{1- \frac{2}{\alpha}} \mu_2 |\theta|^2}
	e^{- i x \frac{\theta}{ n^{1/\alpha}}} d\theta
	= \mc O (e^{-cn}),
\]
for some constant $c>0$. The statement will follow once we bound 
\[
	\int_{-\varepsilon n^{1/\alpha}}^{\varepsilon n^{1/\alpha}}
	F_n(\theta)e^{- \mu_\alpha |\theta|^\alpha-n^{1- \frac{2}{\alpha}} \mu_2 |\theta|^2}
	e^{- i x \frac{\theta}{ n^{ 1/\alpha}}} d\theta \lesssim  n^{-1/\alpha}.
\]
Analogously to before, we have that for $|\theta|\le \varepsilon n^{1/\alpha}$, we have
\[
	|F_n(\theta)| \lesssim \frac{|\theta|^{2\alpha}}{n}.
\]
This concludes the  claim.
\end{proof}

We proceed with the proof of Theorem \ref{prop:lclt}. 
\begin{proof}[Proof of Theorem \ref{prop:lclt}]
In this case, we are only dealing with symmetric distributions.
Using similar ideas as before in the proof of Theorem \ref{thm:lclt}, assume
again that $\theta >0$, we write
	\[
		p^n_X(x)
		=
		\frac{1}{2 \pi n ^{1/\alpha}}
		\int_{-\varepsilon n^{1/\alpha}}^{\varepsilon n^{1/\alpha}}
		[1+F_n(\theta)]e^{- \mu_\alpha |\theta|^\alpha}
		e^{ - i  x \theta n ^{-\frac{1}{\alpha}}} d\theta
		+ \mc O (e^{-cn^{1/\alpha}})
	\]
for some positive constant $c >0$. We have that 
\begin{equation}\label{eq:exp-Fn}
	F_n(\theta)
	=
	\sum_{\beta \in J_\alpha} C_\beta \frac{n}{n^{\beta/\alpha}}|\theta|^\beta + 
	\sum_{\beta \in J_\alpha} C^\prime_\beta \frac{n}{n^{\beta/\alpha}}\sgn(\theta)|\theta|^\beta + 
	\mc O \left( \frac{|\theta|^{2+\alpha}}{n^{2/\alpha}} \right),
\end{equation}
where we used the Taylor polynomial of 
\begin{align*}
  t \mapsto e^{ \sum_{\beta \in R_{\alpha}} n^{1-\beta/\alpha} \mu_{\beta} |t|^{\beta}
+
 \sum_{\beta \in R_{\alpha}} n^{1-\beta/\alpha} \mu^\prime_{\beta} \sgn(t)|t|^{\beta}
  }
\end{align*}
truncated at level
$\mc O \left(\frac{t^{2+\alpha}}{n^{2/\alpha}}\right)$.

Define
	\begin{equation*}
		u_\beta(x):=
		\frac{1}{2\pi}
		\int_{\bb R} 
			 |\theta|^{\beta} \phi_{\bar{X}}(\theta)
			 e^{ix \theta}
		d\theta,
		\text{ and }
		u^\prime_\beta(x):=
		\frac{1}{2\pi}
		\int_{\bb R} 
			 \sgn(\theta) |\theta|^{\beta} \phi_{\bar{X}}(\theta)
			 e^{ix \theta}
		d\theta,
	\end{equation*}
hence we have that for $|\theta| < \varepsilon n^{1/\alpha}$
\begin{align*}
	\Bigg|
	&
	p^n_X(x)- p^n_{\bar{X}}(x)-
	\sum_{\beta\in J_\alpha} C_\beta
	\frac{u_\beta\left( \frac{x}{n^{1/\alpha}} \right)}{n^{(1+\beta-\alpha)/\alpha}}
	 \Bigg|
	 \\ & \lesssim
	  \int_{-\varepsilon n^{1/\alpha}}^{\varepsilon n^{1/\alpha}}
	  	\frac{
		 |\theta|^{2+\alpha} e^{-\mu_\alpha |\theta|^\alpha}}{ n^{3/\alpha}}
		d\theta
	  +
		\sum_{\beta \in J_\alpha} (|C_\beta|+|C^\prime_\beta|)
		\int_{ \bb R \setminus [-\varepsilon n ^{1/\alpha},\varepsilon n^{1/\alpha}]}
		\frac{ |\theta|^{\beta} e^{-\mu_\alpha |\theta|^\alpha}}{ n^{(1+\beta-\alpha)/\alpha}}
		d\theta,
\\ & \lesssim
	n^{-3/ \alpha} 
	  +
	  \mc O \left( e^{-c n} \right)
\end{align*}
for some $c>0$ and $n$ large enough.
\end{proof}
\section{Proofs for Green function/potential kernel expansion}\label{sec:Green}

In this section we will develop potential kernel estimates stated in Theorems
\ref{thm:Green} and \ref{thm:green-1}.  The strategy will be to use detailed
knowledge of the expansion $\phi_X(\cdot)$ and not the LCLT theorem as was done
for the equivalent problem in the classical case in \cite{Limic10}.
Again, here we are restricting ourselves to the symmetric case.

\begin{proof}[Proof of Theorem \ref{thm:Green}]
\textbf{Case (i) $p_X(\cdot)$ repaired} \\
Let
us evaluate the expression 
\[
	a_{X}(x) = \frac{1}{2\pi} \int_{-\pi}^\pi \frac{1}{1-\phi_X(\theta)}
	(\cos(\theta x)-1)d\theta.
\]

The idea is to compare $a_X(x)$ with the potential kernel $a_{\bar{X}}(\cdot)$ of a symmetric stable process $(\bar{X}_t)_{t\geq 0}$ with multiplicative constant $\mu_{\alpha}$ whose characteristic function is given by $\phi_{\bar{X}_t}(\theta) = e^{-\mu_\alpha t |\theta|^\alpha}$.
 This is more convenient since it can be explicitly computed.
 Using that $(t,\theta) \mapsto e^{-\mu_\alpha t |\theta|^\alpha}(\cos(\theta x)-1)$ is in $L^1(\bb R_+ \times \bb R)$, we can use Fubini to get 

\begin{align*}
	a_{\bar{X}}(x)
	&=
	\frac{1}{2\pi} \int_{\bb R}\int_{0}^\infty e^{-t \mu_\alpha |\theta|^\alpha }dt
	(\cos (\theta x) - 1)  d\theta
	\\&=
	\left(
		\frac{1}{2\pi \mu_\alpha}\int_{\mathbb{R}} \frac{1}{ |\theta|^\alpha}
		(\cos (\theta) - 1)  d\theta
	\right)
	|x|^{\alpha-1}
\end{align*}
which gives the expression for the constant $C_\alpha$.
We write 
\[
	a_X(x) = a_{\bar{X}}(x)
	+ \underbrace{\left( a_{X}(x)  - \tilde{a}_{\bar{X}}(x) \right)}_A
	- \underbrace{\left( a_{\bar{X}}(x) - \tilde{a}_{\bar{X}}(x) \right)}_B,
\]
where
\[
	\tilde{a}_{\bar{X}}(x)
:=
	\frac{1}{2\pi \mu_\alpha} \int_{-\pi}^\pi\frac{1}{ |\theta|^\alpha}
	(\cos (\theta x) - 1)  d\theta.
\]

The reminder of the proof is divided  into two parts: estimating the term in $A$  by using H\"older continuity and then the term in $B$  by using an interplay of Fourier transform in the torus $\bb T$ and in $\bb R$ plus a trick involving dyadic partitions of the unity.

We start by analysing the term
\begin{align*}
	a_{X}(x)  - \tilde{a}_{\bar{X}}(x)
&=
	\frac{1}{2\pi} \int_{-\pi}^{\pi}
	\left( \frac{1}{ 1-\phi_X(\theta)}- \frac{1}{ \mu_\alpha |\theta|^\alpha}\right)
	(\cos (\theta x) - 1)  d\theta
	\\ &=
	\frac{1}{2\pi} \int_{-\pi}^{\pi}
	\frac{h_X(\theta)}{ \mu_\alpha |\theta|^\alpha (1- \phi_X(\theta))}
	(\cos (\theta x) - 1)  d\theta.
\end{align*}
where 
\begin{equation*}
	h_X(\theta) :=
	\phi_X(\theta)-\left(  1-\mu_\alpha |\theta|^\alpha \right)
	= \mc O (|\theta|^{2+\alpha})
\end{equation*}
since $p_X(\cdot)$ is repaired. 

It is important to notice that $h_X(\theta)$ is in $C^{1,\alpha-1-}(\bb T)$ due to Lemma \ref{lemma-app-phi-smooth} and the continuity of $\theta \mapsto 1-\mu_\alpha |\theta|^{\alpha}$.
Denote by $ \tilde{h}_X(\theta) := \frac{h_X(\theta)}{ \mu_\alpha |\theta|^\alpha (1- \phi_X(\theta))}$ which is in $L^1(\bb T)$, as $(1-\phi_X(\theta))\neq 0$ for all $\theta \in \bb T \setminus\left\{ 0 \right\} $ again due to the fact that $X$ is supported in $\bb Z$.

Hence, we write for $A$
\begin{align*}
	a_{X}(x)  - \tilde{a}_{\bar{X}}(x)
	&=
	-\frac{1}{2\pi} \int_{-\pi}^{\pi}
	\tilde{h}_X(\theta) d\theta
	+
	\underbrace{\frac{1}{2\pi} \int_{-\pi}^{\pi}
	\tilde{h}_X(\theta) \cos (\theta x)
	d\theta}_{I(x)}.
\end{align*}
The first integral in the r.h.s.~is finite and does not depend on $x$.
 We will show that the second integral on the r.h.s.~above is of order $\mathcal{O}(|x|^{ \frac{\alpha-2}{3+\varepsilon}})$.

This estimate is based on the fact that such integrals are Fourier coefficients of a function in $C^{0, \frac{2-\alpha}{3+\varepsilon}}(\bb T)$ for some $\varepsilon>0$ small enough.

We write 
\[
	f_1(\theta) :=
	\frac{h_X(\theta)}{|\theta|^{2\alpha}}
\]
and
\[
	f_2(\theta):=
	\frac{|\theta|^{\alpha}\left( \mu_\alpha |\theta|^\alpha - h_X(\theta) \right)}{|\theta|^{2\alpha}}=
	\mu_{\alpha} - \frac{h_X(\theta)}{|\theta|^{\alpha}}.
\]
Now, we use Lemma \ref{lem:app-holder-quocient}  to determine the degree of H\"older continuity of $f_1(\cdot)$ and $f_2(\cdot)$.
 For $f_1(\cdot)$ we can choose  $\beta = \alpha-1-\varepsilon$ for any $\varepsilon \in (0,\alpha-1)$, $\beta_0=2+\alpha$ and $\beta_1=2\alpha$ to obtain that $f_{1}(\cdot)$ is H\"older continuous with $\alpha_1 = \frac{2-\alpha}{3+\varepsilon}$ for $\alpha > 1$.
For $f_2(\cdot)$,  we can choose  $\beta = \alpha-1-\varepsilon$, $\beta_0=2+\alpha$ and $\beta_1=\alpha$ which yields to an order $\alpha_2=\frac{2}{3 +\varepsilon}$.
Since $f_2(\cdot)$ is bounded away from 0 we have that the reciprocal $1/f_2(\cdot)$ is H\"older continuous of order $\alpha_2$ as well.
 Therefore the product $f_1(\cdot) \cdot \frac{1}{f_2(\cdot)}$ is H\"older continuous of order $\alpha_1 \wedge \alpha_2=\alpha_1$.
 This implies that $I(x) = \mc{O} (|x|^{-\alpha_1})$, see \cite[Theorem 3.3.9]{grafakos2008classical}.

For the second part of the proof, we estimate the term $B=a_{\bar{X}}(x) - \tilde{a}_{\bar{X}}(x)$.
To do so, let $\varphi \in C^\infty(\bb{R})$ be a symmetric cutoff function such that $\varphi \equiv 1$ in $\bb R\setminus \left[-\pi+\eta,\pi-\eta \right]$ for some arbitrarily small $\eta>0$ and such that $\varphi \equiv 0$ in $\left[ -\pi+2\eta,\pi-2\eta \right]$, we now have

\begin{align*}
	 2 \pi\mu_\alpha
	\left[
		a_{\bar{X}}(x) - \tilde{a}_{\bar{X}}(x)
	\right]
	&=
	\int_{\bb R \setminus \bb T} \frac{1}{|\theta|^\alpha}
	(\cos (\theta x) - 1)  d\theta
\\ & =
	-\underbrace{\int_{\bb R \setminus [-\pi,\pi]} \frac{1}{|\theta|^\alpha}  d\theta}_{\frac{\pi^{1-\alpha}}{\alpha-1}}
	+
	\underbrace{\int_{\bb R} \varphi(\theta) \frac{1}{|\theta|^\alpha}
	\cos (\theta x) d\theta}_{J_1(x)} 
\\ & \phantom{=}
	+  \underbrace{\int_{\bb R} \left[ \1_{\{|\theta|>\pi\}}-\varphi(\theta) \right]  \frac{1}{|\theta|^\alpha}
	\cos (\theta x) d\theta}_{J_2(x)}.
\end{align*}
The constant $-\frac{\pi^{1-\alpha}}{2\pi \mu_{\alpha}(\alpha-1)}$ gives the
second contribution to $C_0$.  We write $J_1(x)= \mc F\left(
\frac{\varphi(\cdot)}{|\cdot|^\alpha} \right)(x)$.

In order to analyse $J_1(x)$ we need to use a dyadic partition of the unity to show that this term decays faster than any polynomial.
 Let $\psi_{-1},\psi_0$ be two radial functions such that $\psi_{-1} \in C_c^{\infty}(B_{\pi}(0))$ and $\psi_0 \in C_c^{\infty}(B_{2\pi}(0) \setminus B_{\pi}(0))$.
They satisfies 
\begin{equation}
	\label{eq:dy-part}
	1 \equiv \psi_{-1}(\theta)
	+ \sum_{j=0}^\infty \underbrace{\psi_0(2^{-j}\theta)}_{=:\psi_j(\theta)}.
\end{equation}
Such functions exist by Proposition 2.10 in \cite{Chemin}, it is an application
of Littlewood-Payley theory.
Define 
\[
	\varrho(\theta):=\frac{\varphi(\theta)}{|\theta|^\alpha}\psi_{-1}(\theta)
\qquad\text{and}\qquad
	\nu(\theta):=\frac{\varphi(\theta)}{|\theta|^\alpha}\psi_0(\theta)
	\equiv \frac{1}{|\theta|^\alpha}\psi_0(\theta),
\]
where, in the identity, we used that $\varphi \equiv 1$ in the $\supp (\psi_0)$.
We have that both $\varrho,\nu\in C^\infty_c (\bb R)$ and therefore their Fourier
transforms decay faster than any polynomial, that is, for any $N>1$, we have that
\begin{equation}
	\label{eq:fast-decay}
	\mc F(\nu)(x),\mc F(\varrho)(x) = \mc O(|x|^{-N}).
\end{equation}
 The fact that we can exchange the infinite sum with the 
Fourier transform is a result of the dominated convergence theorem.

Multiply both sides of \eqref{eq:dy-part} by 
$\varphi(\theta)/ |\theta|^{\alpha}$ , compute $\mc F$
and use the scaling property of the Fourier transform to get
\begin{equation}
	\label{eq:exp-J1}
	J_1(x) = \mc F (\varrho)(x)+ \sum_{j=0}^\infty 2^{(1-\alpha)j }\mc F (\nu)(2^jx).
\end{equation}
By using \eqref{eq:fast-decay} and \eqref{eq:exp-J1}, we get that $J_1(x)=\mc O(|x|^{-N})$
for all $N\ge 1$.
Finally we estimate $J_2(x)$
\begin{align*}
	J_2(x)
&= 
	\int_{-\pi}^\pi 
	\left[ \1_{[|\theta|>\pi]}-\varphi(\theta) \right]  \frac{1}{|\theta|^\alpha}
	\cos (\theta x) d\theta
\\&= 
	- \int_{-\pi}^\pi 
	 \varphi(\theta)  \frac{1}{|\theta|^\alpha}
	\cos (\theta x) d\theta
\end{align*}
where we used that $\varphi\equiv 1$ for $|x|>\pi$.
We can write $J_2(x)=\mc F_{\bb T}(g)(x)$.
Notice that $g$ is $C^{0,1}(\bb T)$, and therefore $J_2(x)$ decays as  $\mc O(|x|^{-1})$ which is faster than $\mc O(|x|^{\frac{\alpha-2}{3+\varepsilon}})$ because $\alpha\in (1,2)$.
This concludes the proof of the second part.
Note that alternatively we could have interpreted the integral $a_{\bar{X}}(\cdot) - \tilde{a}_X(\cdot)$ as a generalised hypergeometric function and study its series expansion which is more implicit.
We preferred this more explicit way as it seems more feasible to generalise to higher dimensions.

\noindent
\textbf{Case (ii) $p_X(\cdot)$ locally or asymptotically repairable}\\
Here we follow a similar idea as in case (ii).
Write again 
\begin{equation*}
	a_X(x) 
=
	\left( a_X(x)- \tilde{a}_{\bar{X}}(x) \right) 
	-\left( a_{\bar{X}}(x) -  \tilde{a}_{\bar{X}}(x)\right) 
	+\tilde{a}_{\bar{X}}(x). 
\end{equation*}

The last two terms are exactly the same as in the proof of (i).
 However, the first term behaves differently due the presence of  $\mu_2 |\theta|^2$.
We have that
\begin{equation}
	\label{eq:idea-thm-Greengen}
	\frac{1}{(1-\phi_X(\theta))}-\frac{1}{\mu_\alpha |\theta|^\alpha}	
	=
	\frac{h_X(\theta)}{\mu_\alpha |\theta|^{\alpha}(1-\phi_X(\theta))}
	=
	\mc O \left( |\theta|^{2-2\alpha} \right) 
\end{equation}
as $ |\theta|\rightarrow 0$,
which blows up  slower than $\mc O(|\theta|^{-\alpha})$ for any $\alpha < 2$.
The main idea is to perform a telescopic sum together with expression \eqref{eq:idea-thm-Greengen} until we get a function in $L^1(\bb T)$, which will require exactly $m_\alpha$ iterations.

Note that, in this proof we are only interested in characterising the potential kernel up to a constant order, therefore, we will not need to use information on the degree of continuity of a remainder term as in previous proofs.
Instead, we will compute the first $m_\alpha$ terms by hand an use that the remainder is in $L^1(\bb T)$, for which an application of the Riemann-Lebesgue Lemma \cite[Proposition 3.3.1]{grafakos2008classical} will be enough.

Let
\begin{align*}
	 a_X(x) 
	 &
	 - \tilde{a}_{\bar{X}}(x)
=
	\frac{1}{2\mu_\alpha \pi} 
	\int_{-\pi}^{\pi} \frac{h_X(\theta)}{|\theta|^\alpha (1-\phi_X(\theta))}
	\left( \cos \left(\theta x \right)-1 \right) d\theta.
\end{align*}

For $\alpha < 3/2$ we have that $m_\alpha=0$  and $\tilde{h}_X(\cdot):=\frac{h_X(\cdot)}{|\cdot|^\alpha\left( 1-\phi_X(\cdot) \right)}$ is in $L^1(\bb T)$.
Indeed,
\begin{align*}
	 a_X(x)  
	 - \tilde{a}_{\bar{X}}(x)
	=
	\int_{-\pi}^\pi \tilde{h}_X(\theta)
	\cos \left(\theta x \right)d\theta
	-
	\int_{-\pi}^\pi \tilde{h}_X(\theta)
	d\theta.
\end{align*}
The second term on the r.h.s.~is a constant, whereas the first vanishes as $|x|\rightarrow \infty$ as before.

For the case $\alpha \in (\frac{3}{2}, \frac{5}{3})$ the proof is analogous to the proof of (i): we compare the integral to its counterpart with $1-\phi_X(\theta)$ substituted by $\mu_\alpha|\theta|^\alpha$ in the denominator.
Notice that we have not yet covered the case $\alpha=3/2$ which is given at the end of the proof.
Here we have:
\begin{align*}
	a_X(x) - \tilde{a}_{\bar{X}}(x)
&:=
	\underbrace{\frac{\mu_2}{2(\mu_\alpha)^2 \pi} 
	\int_{-\pi}^{\pi}
	\frac{h_X(\theta)}{|\theta|^{2\alpha}}
	\left( \cos \left(\theta x \right) -1 \right) d\theta}_{I(x)}
\\ & \phantom{=}+
\underbrace{\frac{1}{2\mu_\alpha\pi} 
	\int_{-\pi}^{\pi} 
	\left(
		\frac{h_X(\theta)}{|\theta|^\alpha (1-\phi_X(\theta))}
	-\frac{\mu_2 h_X(\theta)}{\mu_\alpha|\theta|^{2\alpha})}
	\right)
	\left( \cos \left(\theta x \right) -1 \right) d\theta}_{R_0(x)}.
\end{align*}
The last remainder term $R_0(x)$ is of order $\mc O(1)$ as $|x|\longrightarrow
\infty$ for any $\alpha < 2$, again due to the fact that we can interpret it as
the Fourier transform of a $L^1(\bb T)$ function.

Since we assumed $\alpha > \frac{3}{2}$, 
$\theta\mapsto |\theta|^{2-2\alpha}\left( \cos \left(\theta x \right) -1 \right)$ is in $L^1(\bb R)$
and therefore
\begin{align*}
	I(x)
& =	
	|x|^{2\alpha-3} \frac{\mu_2}{2 (\mu_\alpha)^2 \pi} \int_{-\pi x}^{\pi x}
	|\theta|^{2-2\alpha} \left( \cos(\theta)-1 \right)d\theta
\\ & \phantom{=} +
	\frac{\mu_2}{2\mu_\alpha\pi} 
	\int_{-\pi}^{\pi} 
	\frac{h_X(\theta)- |\theta|^2}{|\theta|^{2\alpha}}
	\left( \cos \left(\theta x \right) -1 \right) d\theta
\\ &=
	\underbrace{|x|^{2\alpha-3} \frac{\mu_2}{2 (\mu_\alpha)^2 \pi} \int_{-\infty}^{\infty}
	|\theta|^{2-2\alpha} \left( \cos(\theta)-1 \right)d\theta}_{I_1(x)}
\\ & \phantom{=} -
\underbrace{|x|^{2\alpha-3}\frac{\mu_2}{2 (\mu_\alpha)^2 \pi}
	\int_{\bb R \setminus [-\pi x, \pi x]}
	|\theta|^{2-2\alpha} \left( \cos(\theta)-1 \right)d\theta}_{R_{1,1}(x)}
\\ & \phantom{=} +
\underbrace{\frac{\mu_2}{2\mu_\alpha\pi} 
	\int_{-\pi}^{\pi} 
	\frac{h_X(\theta)-|\theta|^2}{|\theta|^{2\alpha}}
	\left( \cos \left(\theta x \right) -1 \right) d\theta}_{R_{1,2}(x)}.
\end{align*}
Both terms $R_{1,1},R_{1,2} = \mc O(1)$ as $|x| \longrightarrow \infty$, since
\begin{equation*}
	|x|^{2\alpha-3}
	\left| 
		\int_{\bb R \setminus [-\pi x, \pi x]}
		|\theta|^{2-2\alpha} \left( \cos(\theta)-1 \right)d\theta
	\right|
	=
	\mc O (1),
\end{equation*}
for any $\alpha <2$. 
More generally, let $\alpha \in (1,2)$ and $2/ (2-\alpha) \not \in \bb N$, we write 
\begin{align}
	\label{eq:genGreen-exp}
	\int_{-\pi}^{\pi} \frac{h_X(\theta)
	}{|\theta|^\alpha (1-\phi_X(\theta))}
	 & \left( \cos \left(\theta x \right)-1 \right) d\theta
\\&  =
\nonumber
	\sum_{m=1}^{m_\alpha}
\underbrace{\int_{-\pi}^{\pi} 
	\frac{\mu_2^m}{\mu_\alpha^m} \frac{\left( h_X(\theta) \right)^m}{|\theta|^{(m+1)\alpha}}
	\left( \cos \left(\theta x \right)-1 \right) d\theta}_{I_m(x)}
\\&\phantom{=}+
\underbrace{\int_{-\pi}^{\pi} 
	\frac{\mu_2^{m_\alpha+1}}{\mu_\alpha^{m_\alpha+1}} 
	\frac{\left( h_X(\theta) \right)^{m_\alpha+1}}{|\theta|^{( m_\alpha+1 ) \cdot\alpha}(1-\phi_X(\theta))}
	\left( \cos \left(\theta x \right)-1 \right) d\theta}_{R(x)}
\nonumber
\\&  =
\nonumber
	\sum_{m=1}^{m_\alpha}
	I_m(x)
+
	R(x).
\nonumber
\end{align}
We chose $m_\alpha = \lceil \frac{\alpha-1}{2-\alpha} \rceil-1$ as the minimal value of
$m$ such that 
\[
	\frac{(h_X(\theta))^{m_\alpha +1} }{(1-\phi_X(\theta))|\theta|^{m_\alpha+1}} \in L^1(\bb T).
\]
Analogously as before we argue that $R(x)=\mc O(1)$ as $|x| \longrightarrow \infty$.

Finally, for $m \le m_\alpha$ we have 
\[
	\frac{h_X^m(\theta)}{\mu_\alpha^m |\theta|^{m \alpha} (1-\phi_X(\theta))}
	=
	\frac{\mu_2^m}{\mu_\alpha^{m+1}}|\theta|^{m(2-\alpha)-\alpha} + 
	\mc O \left( |\theta|^{m(2-\alpha)-1} \right),
\]
and as $\alpha<2$, we have that ${m(2-\alpha)-1}>-1$, using a 
change of variable we get
\begin{align*}
	I_m(x)
&=
	\frac{\mu_2^m}{\mu_{\alpha}^{m}}
	\int_{-\pi}^\pi 
	|\theta|^{m(2-\alpha)-\alpha} \left( \cos(\theta x)-1 \right)
	d\theta
	+
	\mc O(1)
\\&=
|x|^{(\alpha-1) - m(2-\alpha)}
	\frac{\mu_2^m}{\mu_{\alpha}^{m}}
	\int_{-\infty}^\infty 
	|\theta|^{m(2-\alpha)-\alpha} \left( \cos(\theta )-1 \right)
	d\theta
\\ & \phantom{=}-
	\frac{\mu_2^m}{\mu_{\alpha}^{m}}
	\int_{ \bb R \setminus [-\pi |x|, \pi |x|]}
	|\theta|^{m(2-\alpha)-\alpha} \left( \cos(\theta x)-1 \right)
	d\theta
	+
	\mc O(1).
\end{align*}
Where the first integral in the second line is finite because $m < m_\alpha$.
Again, notice that the last integral is of order $\mc O(1)$ as $|x|\longrightarrow \infty$.

Finally, if $2/ (2-\alpha) \in \bb N$ (which includes the $\alpha=3/2$ case), we have that 
\[
	\frac{(h_X(\theta))^{m_\alpha +1} }{(1-\phi_X(\theta))|\theta|^{m_\alpha+1}} - 
	\frac{\mu_2^{m_\alpha+1}}{\mu_\alpha^{m_\alpha+2}|\theta|} \in L^{1}(\bb T).
\]
Now, we proceed like before, but also taking into account the contribution of the integral
\[
	\frac{1}{2\pi}\int_{-\pi}^\pi \frac{\cos (x\theta)-1}{|\theta|} d\theta
	=
	\frac{1}{\pi}\int_{0}^{\pi |x|} \frac{\cos (\theta)-1}{\theta} d\theta
\]
and using Lemma \ref{lem:app-int}.
This concludes the proof.
\end{proof}

\begin{proof}[Proof of Theorem \ref{thm:green-1}]
We will only prove the repaired case, as the other case is just an adaptation of 
the arguments of Theorem \ref{thm:Green} case \textit{(ii)} together
with the considerations we will present here.

Instead of comparing $a_X(\cdot)$ and $a_{\bar{X}}(\cdot)$ and $\tilde{a}_{\bar{X}}(\cdot)$, we 
we will only compare $a_X(\cdot)$ and $\tilde{a}_{\bar{X}}(\cdot)$. That is, we have
\[
	a_{X}(x) := \frac{1}{2\pi} \int_{-\pi}^\pi \frac{1}{1-\phi_X(\theta)} (\cos(\theta x)-1)d\theta
\]
and we define 
\[
	\tilde{a}_{\bar{X}}(x) := \frac{1}{2\pi} \int_{-\pi}^\pi \frac{1}{\mu_1|\theta|} (\cos(\theta x)-1)d\theta.
\]
Write now
\[
	a_{X}(x) :=\tilde{a}_{\bar{X}}(x)+\left(a_{X}(x)-\tilde{a}_{\bar{X}}(x)\right).
\]
To evaluate the second term, we use a very similar approach to the one in the proof of Theorem 
\ref{thm:Green}. Using the second part of the statement of Lemma \ref{lem:app-holder-quocient}, we get
$g:\theta \mapsto \frac{1}{\mu_1|\theta|} - \frac{1}{1-\phi_X(\theta)}$ is in $C^{0,1/3-}(\bb T)$.
Indeed, by writing 
\begin{equation*}
	g(\theta) 
	=
	\frac{f_1(\theta)}{\mu_1-f_2(\theta)}
\end{equation*}
where $f_1(\theta):= h_X(\theta)/|\theta|^2$ and $f_2(\theta)=h_X(\theta)/|\theta|$ and applying the second statement of Lemma~\ref{lem:app-holder-quocient} for $f_1$ and $f_2$, we get that $f_1 \in C^{0,1/3-}(\bb  T)$ and $f_1 \in C^{0,2/3-}(\bb  T)$, by taking the minimum of the regularities, we recover the desired result.

It remains to evaluate $\tilde{a}_X(x)$. Note that 
\[
	\tilde{a}_{\bar{X}}(x) = \frac{1}{\pi \mu_1} \int_0^{\pi |x|} \frac{\cos(\theta)-1}{\theta} d\theta. 	
\]
Again,	using Lemma \ref{lem:app-int}, we conclude the result.
\end{proof}

\begin{proof}[Proof of Theorem \ref{thm:green-less1}]
This proof is similar to the one of Theorem~\ref{thm:Green}. By writing
\begin{equation*}
	g_X(x)=\frac{1}{2\pi} \int_{\bb T} \frac{1}{1-\phi_X(\theta)} \cos(\theta x)d\theta
\end{equation*}
and comparing it to
\begin{equation*}
	g_{\bar{X}}(x)=\frac{1}{2\pi} \int_{\bb R} \frac{1}{\mu_\alpha |\theta|^\alpha} \cos(\theta x)d\theta
	\quad
	\text{ and }
	\quad
	\tilde{g}_{\bar{X}}(x)=\frac{1}{2\pi} \int_{-\pi}^\pi \frac{1}{\mu_\alpha |\theta|^\alpha} \cos(\theta x)d\theta,
\end{equation*}
we can obtain the error bound by using the second statement of Lemma~\ref{lem:app-holder-quocient}. 
\end{proof}

\section{Fluctuations of Gaussian Fields driven by admissible random walks}
\label{sec:second-order-conv} 
\subsection{Proof of Theorem~\ref{thm:converge-of-fields-elip}}\label{subsec:elliptic-case}


Before we proceed to the proof, we define the required coupling.
To do so, we define $\{\xi^m(x)\}_{m \in \bb N, x \in \bb T_m}$ by taking
\begin{equation}\label{def:discrete-wnoise} 
  \xi^m(x):=\frac{m^{1/2}}{2\pi} \<\xi,  \1_{B_{2\pi/m}(x)}\>,
\end{equation}
where $\xi$ is the same realisation of the white-noise used in the definition of $\Xi_\alpha$.
This allows us to define $\Xi^m - \Xi_\alpha$.

It is easy to show that $\{{\bf e}_k^m\}_{k \in \bb Z_m}$ forms an orthonormal basis of eigenfunctions of $\mc L^m$, where we remember that ${\bf e}_k^m = {\bf e}_k \mid_{\bb  T_m}$ with ${\bf e}_k:= \exp (i k \cdot x)$, $x\in \bb T_m$. 
Moreover, simple computations show that the eigenvalue of $\mc L^m$ associated to ${\bf e}^m_k$ is precisely given by
\begin{equation}\label{eq:eigenvalues}
  -\lambda^{m}_{k} = 1-\phi_X\left(\frac{k}{m}\right)
\end{equation}
for each $k \in \bb Z$.
Consider the Green's function $g_m(\cdot,\cdot)$ associated to the generator in the torus $\bb T_m$ , i.e, the solution of the equation 
\begin{equation*}
  \begin{cases}
 \left( \mc L^m g_{m}(x,\cdot)\right)(y) = \delta_{x}(y) &  \text{ if } y\in \bb T_m, \\
  \sum_{y \in \bb T_m}g_{m}(x,y)=0.
  \end{cases} 
\end{equation*}
Simple computations show that 
\begin{equation*}
  g_{m}(x,y)=\frac{1}{2\pi m}\sum_{k \in \bb Z_m\setminus \{0\}}
  \frac{{\bf e}^{m}_k(x) \overline{{\bf e}^{m}_k(y)}}{\lambda^{m}_k}.
\end{equation*}

We can explicitly write $\Xi^m(x)$ as 
\begin{equation*}
  \Xi^m(x) = g_{m} * (\xi^m - \< \xi^m, 1\>)(x)
\end{equation*}
where $*$ denotes the usual convolution.
Likewise, we can write $\Xi_{\alpha}$ as a convolution (in the distributional sense) as 
\begin{equation*}
  \Xi_{\alpha}(x) = g_\alpha * (\xi - \< \xi, 1\>)(x)
\end{equation*}
where $g_\alpha$ is the Green function associated to $-(-\Delta)_{\bb T}^{\alpha/2}$, which is given by
\begin{equation}\label{eq:cont-green}
  g_{\alpha}(x,y)=\frac{1}{2\pi m}\sum_{k \in \bb Z\setminus \{0\}}
  \frac{{\bf e}_k(x) \overline{{\bf e}_k(y)}}{|k|^{\alpha}}.
\end{equation}

\begin{proof}[Proof of Theorem~\ref{thm:converge-of-fields-elip}]
The proof uses the coupling of the white noise and the explicit Green's function identities provided above.
Define
\begin{equation*}
  \Xi^m_{\alpha,\beta,Err}:=
 m^{\beta-\alpha} \left(\Xi^m-\frac{1}{\mu_\alpha}\Xi_{\alpha}\right)
 -
\frac{\mu_\beta}{(\mu_\alpha)^2}\Xi_{2\alpha-\beta}.
\end{equation*}
We analyse each Fourier coefficient of this field and write
\begin{equation}\label{eq:field-error-split}
\<   \Xi^m_{\alpha,\beta,Err}, {\bf e}_k \>
=
\underbrace{
m^{\beta-\alpha} \<\xi, {\bf e}_k\>
			 \left(
			   \frac{1}{m^\alpha \lambda^{m}_k}
			   -
			   \frac{1}{\mu_\alpha |k|^\alpha}
			   -
			    m^{\alpha-\beta}\frac{\mu_\beta}{(\mu_\alpha)^2 |k|^{2\alpha-\beta}}
			 \right)}_{A_1(m,k)}
+
\underbrace{ m^{\beta-\alpha}\frac{ \<\xi,\tilde{{\bf e}}^m_k-{\bf e}_k\> }{m^\alpha \lambda^{m}_k},}_{A_2(m,k)}
\end{equation}
where $\tilde{{\bf e}}^m_k=\sum_{z \in \bb T_m}{\bf e}_k(z)\1_{B_{2\pi/m}(z)}$. 
We can show that 
\begin{align}\label{eq:A1-fields} 
  \nonumber
 \bb E
 \left(|A_1(m,k)|^2\right)
& = 
  m^{2\beta-2\alpha}
\left(
	\frac{m^\alpha h_X\left(\frac{k}{m}\right)}{
	\mu_\alpha |k|^\alpha\left(\mu_\alpha |k|^\alpha + m^\alpha h_X\left(\frac{k}{m}\right)\right)}
   -
	\frac{\mu_\beta m^{\alpha-\beta}}{(\mu_\alpha)^2 |k|^{2\alpha-\beta}}
\right)^2
\\&  = \nonumber
  |k|^{2\beta-4\alpha}
\left(
\mc O\left(\frac{|k|^{\gamma-\beta}}{m^{\gamma-\beta}}\right)
+
\mc O\left(\frac{|k|^{\beta-\alpha}}{m^{\beta-\alpha}}\right)
\right)^2
\\&  = 
\mc O\left(\frac{|k|^{2\delta_1 + 2\beta - 4\alpha}}{m^{2\delta_2}}\right)
\end{align}
for all $k \in \bb Z_m$ with with $\delta_1:=(\gamma-\beta)\vee(\beta-\alpha)$ and $\delta_2:=(\gamma-\beta)\wedge(\beta-\alpha)$. 

On the other hand, we have that 
\begin{equation}\label{eq:A2-fields} 
  \bb E (|A_2(m,k)|^2)
  \le C
  m^{2(\beta-\alpha-1)}
  |k|^{2-2\alpha}.
\end{equation}

Therefore, we can see that 
\begin{align*}
&
\bb E \left(\|\Xi^m_{\alpha,\beta,Err}\|^2_{H^{-s}}\right)
\\&  = 
  \sum_{k \in \bb Z \setminus \{0\}}
  \bb E(|\< \Xi^m_{\alpha,\beta,Err}, {\bf e}_k \>|^2)
  |k|^{-4s}
\\&  = 
  \sum_{k \in \bb Z_m \setminus \{0\}}
  \bb E(|\< \Xi^m_{\alpha,\beta,Err}, {\bf e}_k \>|^2)
  |k|^{-4s}
  +
  \sum_{k \in \bb Z \setminus \bb Z_m}
  \bb E|\< \Xi^m_{\alpha,\beta,Err}, {\bf e}_k \>|^2
  |k|^{-4s}
\\&  \le 
  Cm^{-2\delta_2}
  +
  Cm^{2(\beta-\alpha) -1}
  +
  C
  m^{2\beta-4\alpha-4s+1}
  +
  C
  m^{2\beta-4\alpha-4s+1}
\end{align*}
which goes to $0$ as long as $s > s_0$ and $\beta < \alpha +1$. 
Applying Chebyshevs's inequality, we recover the convergence in probability.
\end{proof}

\subsection{Proof of Theorem~\ref{thm:converge-of-fields-parab}}
\label{subsec:parabolic-case}

Again, we need to make a few observations before deriving the proof.
By interpreting $\zeta^m$ as a map $t \mapsto \zeta^m(t,\cdot) \in \ell^2(\bb T_m)$, we can look at $\hat{\zeta}^{m}(t,k)$ the $k$-th Fourier coefficient of $\zeta^m(t,\cdot)$.
Notice that due to linearity
\begin{align*}
  d\hat{\zeta}^m(t,k) 
& := 
  \frac{1}{m} 
  \sum_{x \in \bb T_m}
  d\hat{\zeta}^m(t,k) {\bf e}^m_{-k}(x)
\\&  =  
  \frac{1}{m} 
  \left(
  \sum_{x \in \bb T_m}
  m^\alpha\mc L^{m} \zeta^m(t,x)  {\bf e}^m_{-k}(x) \right)dt
  +
  \frac{1}{m} 
  \sum_{x \in \bb T_m}
  {\bf e}^m_{-k}(x)d\xi^m(t,x) 
\\&  =  
  -m^\alpha \lambda^m_k\hat{\zeta}^m(t,k)dt
  +
  d\hat{\xi}^m(t,k) 
\end{align*}
where $\{\hat{\xi}^m(\cdot,k)\}_{k \in \bb Z^m}$ is also collection of i.i.d. Brownian motions.
Notice that $\hat{Z}^m(t,k) = \hat{\zeta}^m (t,k)$. 
Then we use It\^o's formula to get that that the term $\hat{Z}^m(t,k)$ can be written as
\begin{align*}
 \hat{Z}^m(t,k) 
:=
\hat{Z}^m_0(k)e^{-m^\alpha\lambda_k^m t}
+
 \int_{0}^t e^{-m^\alpha\lambda_k^m (t-s)}  
 \hat{\xi}^m(ds,k).
\end{align*}

Now, we construct a coupling between the continuous and discrete versions by taking 
\begin{equation*}
  \xi^m(t,x):= m^{1/2} \< \xi(t,\cdot), \1_{B_m(x)} \>
\end{equation*}
where we are abusing the notation as $\xi^m(t,x)$ is not a function in $t$ (but rather a distribution).
We can write 
\begin{align*}
 \hat{Z}^m(t,k) 
:=
m^{1/2}\hat{Z}^m_0(k)e^{-m^\alpha\lambda_k^m t}
+
 m^{1/2}\xi ( f_{m,k}(t,\cdot,\cdot) )
\end{align*}
where $f_{m,k}(t,s,y):= \1_{[0,t]}(s)e^{-m^\alpha \lambda_k^m (t-s)} \tilde{{\bf e}}^m_k(y)$ and $\tilde{{\bf e}}^m_k$ is the same as in \eqref{eq:field-error-split}. 

By running a similar argument as the elliptic one, we have that 
\begin{equation}\label{eq:conv-parabollic-field}
  m^{\beta-\alpha}\left(\frac{Z^{m}}{m^{1/2}} - Z_\alpha\right)- Z_{Err} \longrightarrow 0 \text{ in probability }
\end{equation}
where the convergence happens in $L^2([0,T],H^{-s})$ for any $s >\max\{2\beta-\alpha,\gamma-\alpha\}$ and for any $T>0$ and $Z_{Err}$ is characterised by its Fourier coefficients as 
\begin{align*}
  \hat{Z}_{Err}(t,k):=
  -\mu_\beta|k|^\beta\hat{Z}_0(k)e^{-\mu_\alpha |k|^\alpha t}t
 - \mu_\beta\int_{0}^t e^{-\mu_\alpha |k|^\alpha (t-s)} |k|^\beta (t-s)\hat{\xi}(ds,k).
\end{align*}

\begin{proof}[Proof of Theorem~\ref{thm:converge-of-fields-parab}]
Due the linearity of the problem, we can deal with the initial condition separately, an analysis that follows similarly to the one of the forcing term.
Therefore, we assume that $Z_0 \equiv 0 $.
Again, we examine each Fourier mode independently and write 
\begin{align*}
  \frac{\hat{Z}^m(t,k)}{m^{1/2}}:= \xi \left( \1_{[0,t]}(\cdot)e^{-m^\alpha \lambda_k^m (t-\cdot)} {\bf e}_k(\cdot)\right) 
  + \xi \left(  \1_{[0,t]}(\cdot)e^{-m^\alpha \lambda_k^m (t-\cdot)} (\tilde{{\bf e}}^m_k(\cdot)-{\bf e}_k(\cdot))\right).
\end{align*}
From this we examine
\begin{align*}
  m^{\beta-\alpha}&\left(\frac{\hat{Z}^{m}(t,k)}{m^{1/2}} - \hat{Z}_\alpha(t,k)\right)- \hat{Z}_{Err}(t,k)
  =
\\ &
   \xi \left( \1_{[0,t]}(\cdot)
	 \left(m^{\beta-\alpha}\left(e^{-m^\alpha \lambda_k^m (t-\cdot)}
	 -
	  e^{-\mu_\alpha|k|^\alpha (t-\cdot)}\right)
	 +
	  \mu_\beta |k|^\beta  e^{-\mu_\alpha|k|^\alpha (t-\cdot)}\right)(t-\cdot)
   {\bf e}_k(\cdot)\right) 
\\ &  \phantom{=}   
  +m^{\beta-\alpha} \xi \left(  \1_{[0,t]}(\cdot)e^{-m^\alpha \lambda_k^m (t-\cdot)} (\tilde{{\bf e}}^m_k(\cdot)-{\bf e}_k(\cdot))\right)
\\&  = 
A_m(t,k) + B_m(t,k). 
\end{align*}
Now we fix $T>0$, we have that the following bound on the second moment of the second term
\begin{align*}
  \int_{0}^T \bb E ( (B_m(t,k))^2 ) dt
& =
  \int_{0}^T \frac{1-e^{-2 m^\alpha \lambda^m_k t}}{2 m^\alpha \lambda^m_k } \|\tilde{{\bf e}}^m_k-{\bf e}_k\|^2_{L^2} dt
\\& \lesssim
  T m^{2(\beta-\alpha-1)} |k|^{2-\alpha}.
\end{align*}
As for the first term, using Taylor expansion we have that 
\begin{align*}
 & \int_{0}^T \bb E (A(t,k)^2 ) dt
 = 
  \int_{0}^T 
   \int_{0}^t 
   e^{-2\mu_\alpha |k|^\alpha(t-s)}
	 \left(
   m^{\beta-\alpha}
   \left(  e^{(-m^\alpha \lambda_k^m + \mu_\alpha |k|^\alpha)(t-s)}
	- 1\right)
	 +
	  \mu_\beta |k|^\beta  (t-s)
	\right)^2
	  ds
  dt
\\&  =
  \int_{0}^T 
   \int_{0}^t 
   e^{-2\mu_\alpha |k|^\alpha(t-s)}
   \left(
	\mc	O \left(m^{\beta-\gamma}|k|^\gamma(t-s)\right)
	+
	\mc	O \left(m^{\alpha- \beta}|k|^{2\beta}
	(t-s)^2\right)
	\right)^2
	  ds
  dt
\\&  \lesssim
(1\vee T^{3})  |k|^{s_1-\alpha} m^{-s_2}
\end{align*}
where $s_1 =\max \{\gamma, 2\beta\} $ and $s_2:=\max\{\gamma-\beta,\beta-\alpha\}$, the proof now follows from another Chebyshev's inequality and a triangular inequality.

\end{proof}
\appendix
\section{Evaluation of some special integrals}
\begin{lemma}\label{lem:ap-R-alpha+}
For $\alpha \in (0,2)\setminus \{1\}$, let $R^\infty_\alpha$  be defined as in
\eqref{def:r-infity+}. Then, there exist real constants $K_1,\dots,K_3$ such that
\begin{equation} \label{asymp-of-R+}
	R^\infty_{\alpha,+}(\theta) = 
	\sum_{k=1}^3 i^k K_k \theta^k
	+ \mc O(|\theta|^{2+\alpha}). 
\end{equation}
\end{lemma}

The proof of this lemma is very similar to the next, which refers to the symmetric case.
However, in the symmetric case, we need to be more careful, as we will be interested in showing that the distribution $p_\alpha(\cdot)$ is not only admissible, but repairable.
That means we will need to control the signs of certain constants.
To avoid repeating ourselves, we will present only the proof of Lemma~\ref{lem:ap-R-alpha}.

\begin{lemma}\label{lem:ap-R-alpha}
For $\alpha \in (0,2)\setminus \{1\}$, we have that $R^\infty_\alpha$ defined in \eqref{def:r-infity}
satisfies 
\begin{equation} \label{asymp-of-R}
	R^\infty_\alpha(\theta) = K_2|\theta|^2
			+\mc O(|\theta|^{2+\alpha})
\end{equation}
where
\begin{align}
\label{eq:asymp-of-R-2}
	K_2
	=
	\frac{1-\alpha}{2} 
	\Bigg(
	&\left( \frac{2^{2-\alpha}-1}{2-\alpha}-\frac{3(2^{1-\alpha}-1)}{2(1-\alpha)}\right)
	\\ &+
	\frac{1}{2\Gamma(\alpha)}
	\sum_{m=1}^\infty (-1)^m (\zeta(m+\alpha)-1) \frac{m \Gamma(m+\alpha)}{\Gamma(m+2)(m+2)}
	\Bigg).
	\nonumber
\end{align}
\end{lemma}

\begin{proof}[Proof of Lemma~\ref{lem:ap-R-alpha} ]
Recall that $\theta >0$,
\begin{align*}
	R^\infty_\alpha = \theta^{1+\alpha}
	\int_{\theta}^\infty
	\Big(\frac{z \sin(z) - (1+\alpha)(1-\cos(z))}{z^{2+\alpha}}\Big)
	P_1\Big(\frac{z}{\theta}\Big)	dz
\end{align*}
and $P_1(x)=\left(x - \lfloor x \rfloor \right)- \frac{1}{2}$.  Note that this
integral is finite. Indeed, one can prove this by observing that $|P(z)|\le
\frac{1}{2}$. We shall now divide the integral in $R_\alpha^\infty$ in two
parts, one going from $\theta$ to $1$ and the other $1$ to $\infty$, as we will
use different techniques to bound them.
\begin{align*}
	R_\alpha^\infty &=
\underbrace{\theta^{1+\alpha}
	\int_{\theta}^1
	\frac{z \sin(z) - (1+\alpha)(1-\cos(z))}{z^{2+\alpha}}
	P_1\Big(\frac{z}{\theta}\Big) dz}_{I_1}
	\\ &\phantom{=}+
\underbrace{\theta^{1+\alpha}
	\int_{1}^\infty
	\frac{z \sin(z) - (1+\alpha)(1-\cos(z))}{z^{2+\alpha}}
	P_1\Big(\frac{z}{\theta}\Big) dz}_{I_2}.
\end{align*}
We start by analysing $I_2$ and proving that $I_2 = \mc O(|\theta|^{2+\alpha})$, 
\begin{align*}
	I_2 
&=
	\theta^{1+\alpha}\int_{1}^\infty
	\frac{z \sin(z)- (1+\alpha)(1-\cos(z))}{z^{2+\alpha}}
	P_1\Big(\frac{z}{\theta}\Big) dz.
\end{align*}
For convenience, we assume that
$\theta^{-1}\in \bb N$.  To treat the general case we need 
to compare the expressions between for $\theta^{-1}$ and  
$\lfloor\theta^{-1}\rfloor$. 

In this case, we can write the
integral above as 
\begin{align*}
	I_2 
&=
\theta^{1+\alpha}	\sum_{k=1/\theta}^\infty \int_{k\theta}^{(k+1)\theta}
	g(z)\left(\frac{z}{\theta}-k-\frac{1}{2} \right)dz,
\end{align*}
where $g(z):=\frac{z \sin(z)- (1+\alpha)(1-\cos(z))}{z^{2+\alpha}}$. Now, we will
use that $\int_{k\theta}^{(k+1)\theta} P_1\left(\frac{z}{\theta}\right)dz =0$ and 
sum and subtract the term $g(k\theta)$ in each term of the summands. Hence
\begin{align*}
	|I_2| 
&=
	|\theta|^{1+\alpha}
	\left |	
	\sum_{k=1/\theta}^\infty \int_{k\theta}^{(k+1)\theta}
	(g(z)-g(k\theta))\left(\frac{z}{\theta}-k-\frac{1}{2} \right)dz,
	\right|
\\ & \le 
	|\theta|^{1+\alpha}
	\sum_{k=1/\theta}^\infty  \sup_{y \in [k\theta,(k+1)\theta]} |g^\prime(y)|
	\int_{k\theta}^{(k+1)\theta}
	|z-k\theta|\left|\frac{z}{\theta}-k-\frac{1}{2} \right|dz,
\\ & \le 
\frac{1}{4}	|\theta|^{3+\alpha}\sum_{k=1/\theta}^\infty  \sup_{y \in [k\theta,(k+1)\theta]} |g^\prime(y)|,
\end{align*}
where we used in the second inequality both a change of variables and
that $|z-k\theta|\le \theta$.

For $z>0$, we have
\[
	g^{\prime}(z)=\frac{\cos(z)}{z^{1+\alpha}}-2(1+\alpha)\frac{\sin(z)}{z^{2+\alpha}}+(1+\alpha)(2+\alpha)\frac{1-\cos(z)}{z^{3+\alpha}}
\]
and therefore there is a constant $C_\alpha$ that only depends on $\alpha$, such 
\[
	|g^{\prime}(z)| \le \frac{C_\alpha}{z^{1+\alpha}}
\]
which implies
\[
	\sup_{[k\theta,(k+1)\theta]}|g^\prime(z)| \le \frac{C_\alpha}{(\theta k )^{1+\alpha}}.
\]
We can now use this in the estimate of $|I_2|$ to get 
\[
	|I_2|
\le	
	C \theta^{2}
	\sum_{k=1/\theta}^\infty \frac{1}{k^{1+\alpha}} 
\lesssim
	 |\theta|^{2+\alpha}
\]
and $I_2 = \mathcal{O}(|\theta|^{2+\alpha})$.

Now, for $I_1$, we use Taylor expansion of the function $h(z) = z \sin (z) - (1+\alpha)(1-\cos (z))$ to get
\[
	h(z)=\frac{1-\alpha}{2}z^2 -\frac{3-\alpha}{24}z^4 + r(z),
\]
where $r(z)=\mc O(z^6)$. We get
\begin{align*}
	I_1
	&=
	\theta^{1+\alpha}
	\frac{1-\alpha}{2}
	\int_{\theta}^1
	\frac{1}{z^{\alpha}}
	P_1\Big(\frac{z}{\theta}\Big) dz
	-
	\theta^{1+\alpha}
	\frac{3-\alpha}{24}
	\int_{\theta}^1
	z^{2-\alpha}
	P_1\Big(\frac{z}{\theta}\Big) dz
	\\ & \phantom{=}
	+
	\theta^{1+\alpha}
	\int_{\theta}^1 
	\frac{r(z)}{z^{2+\alpha}}	
	P_1\Big(\frac{z}{\theta}\Big) dz
	\\ &= \frac{1-\alpha}{2}I_{1,1}-\frac{3-\alpha}{24}I_{1,2}+I_{1,3}.
\end{align*}
Again we examine each of the terms separately. We start with the last one. 
For this, notice that $r(\cdot)$ is a $C^\infty([-1,1])$ function, as it is the difference 
of two such functions. Moreover, we know that 
$\tilde{r}(z):=\left| \frac{r(z)}{z^{2+\alpha}} \right|$ 
and therefore, applying Lemma \ref{lem:app-holder-quocient} we have that $\tilde{r}(\cdot)$ is 
in $C^{0, \frac{4-\alpha}{6}-}([-1,1])$. Now we can proceed like we did for $I_2$ to get that 
$I_{1,3}$ is of order $\mc O(\theta^{2+\alpha})$.

The first integral $I_{1,1}$ can be written as, again assuming that 
$\theta^{-1} \in \bb N$, 
\begin{align*}
	I_{1,1}
	&
	=
	\theta^{1+\alpha}
	\sum_{k=1}^{\lfloor \frac{1}{\theta} \rfloor -1 }
	\int_{k\theta}^{(k+1)\theta}
	\frac{1}{z^{\alpha}}
 	\Big(\frac{z}{\theta}- k - \frac{1}{2}\Big)dz
	\\ &=
	\theta^{2}
	\sum_{k=1}^{\lfloor \frac{1}{\theta} \rfloor -1 }
	k^{2-\alpha}\left[
	\frac{\Big(1+\frac{1}{k}\Big)^{2-\alpha}-1}{2-\alpha}
	-
	\Big(1+\frac{1}{2k}\Big)
	\frac{\Big(1+\frac{1}{k}\Big)^{1-\alpha}-1}{1-\alpha}\right].
\end{align*}
We now split the terms with $k=1$ and $k>1$,
\begin{align}\label{eq:I11} 
	I_{1,1}
	&
	\nonumber
	=
	\theta^2 \left( \frac{2^{2-\alpha}-1}{2-\alpha}-\frac{3(2^{1-\alpha}-1)}{2(1-\alpha)}\right)
	\\&+
	\theta^{2}
	\sum_{k=2}^{\lfloor \frac{1}{\theta} \rfloor -1 }
	k^{2-\alpha}\left[
	\frac{\Big(1+\frac{1}{k}\Big)^{2-\alpha}-1}{2-\alpha}
	-
	\Big(1+\frac{1}{2k}\Big)	
	\frac{\Big(1+\frac{1}{k}\Big)^{1-\alpha}-1}{1-\alpha}\right].
\end{align}
Use now the full Taylor series of both $(1+x)^{2-\alpha}$ and $(1+x)^{1-\alpha}$ where we are taking $x= \frac{1}{k} \in(0,1)$ to explore the cancellations.
For a fixed $k>1$, the expression inside the square brackets in the last summation is
\begin{align*}
	\frac{1}{2-\alpha}\sum_{j=1}^{\infty} \frac{(2-\alpha)_j}{j!} \frac{1}{k^j}
	-
	\frac{1}{1-\alpha}\sum_{j=1}^{\infty} \frac{(1-\alpha)_j}{j!} \frac{1}{k^j}
	-
	\frac{1}{2(1-\alpha)}\sum_{j=1}^{\infty} \frac{(2-\alpha)_j}{j!} \frac{1}{k^{j+1}}
\end{align*}
where $(x)_j:= x (x-1)\dots (x-j+1)$.
By grouping the powers of $\frac{1}{k}$ together, we can check by hand that the coefficients of $\frac{1}{k}$ and $\frac{1}{k^2}$ are zero.
Moreover, by a simple change of variable on the last sum, we have that the sum above equals
\begin{align*}
	\sum_{j=3}^{\infty} 
	\left(\frac{(1-\alpha)_{j-1}}{j!}
	-
	\frac{(-\alpha)_{j-1}}{j!}
	-\frac{(-\alpha)_{j-2}}{2(j-1)!}
	\right)
	\frac{1}{k^j},
\end{align*}
where we used that $\frac{(x)_j}{x}=(x-1)_{j-1}$ and $(x)_{j+1}=(x)_j (x-j)$. 
Rewriting this expression in terms of Gamma functions, we have that the last term of \eqref{eq:I11} is equal to
\begin{equation}\label{eq:sumI2}
	\theta^{2}
	\sum_{k=2}^{\lfloor \frac{1}{\theta} \rfloor -1 }
	\sum_{j=3}^\infty
	k^{2-\alpha-j}
	\frac{(j-2)\Gamma(1-\alpha)}{2 j! \Gamma( - \alpha - j +3)}.
\end{equation}
From the reflection formula for the Gamma function and a change of variables
$m = j-2$, we get
\begin{align*}
\eqref{eq:sumI2}
	=
	\frac{\theta^2}{2\Gamma(\alpha)}
	\sum_{k=2}^{\lfloor \frac{1}{\theta} \rfloor -1 }
	\sum_{m=1}^\infty (-1)^m k^{-\alpha-m} \frac{m \Gamma(m+\alpha)}{(m+2)!}.
\end{align*}

Now, using Euler-Maclaurin again, one can easily prove that for $\alpha \in (0,2)$ and $m\ge 1$, 
\begin{align}
	\label{eq:bound-inc-zeta}
\left	|\sum_{k=2}^{\lfloor \frac{1}{\theta} \rfloor -1}k^{-\alpha-m}
	-
	(\zeta(m+\alpha)-1) + \frac{\theta^{m+\alpha-1}}{m+\alpha-1} \right|
	\le C \theta^{m+\alpha}
\end{align}
where the constant $C$ does not depend on $m$ or $\alpha$. Therefore there exist an explicit constant $K_2$ 
\[
	I_{1,1} = K_2 |\theta|^2 + \mathcal{O}(|\theta|^{2+\alpha}).
\]

Finally, we can show in an analogous way that  $I_{1,2} = \mathcal{O}(|\theta|^4)$. 
For the case $\alpha=1$ we proceed in a similar way.
We need to evaluate
\[
	R^\infty_1 = \theta^{2}
	\int_{\theta}^\infty
	\Big(\frac{z \sin(z)- 2(1-\cos(z))}{z^{3}}\Big)
	P_1\Big(\frac{z}{\theta}\Big)	dz.
\]
Using similar ideas as before and the fact that
$z \sin(z)- 2(1-\cos(z))=\mc O(z^4)$ when
$|z| \to 0$ instead of the order $\mc O (z^2)$ that we got for the case
$\alpha \in (1,2)$ we conclude the proof.
\end{proof}

It is worth explaining why we do not simply use the triangular inequality to bound the series representation by taking 
\begin{align*}
	|I_{1,1}|
	&
	\le
	\theta^{1+\alpha}
	\sum_{k=1}^{\lfloor \frac{1}{\theta} \rfloor -1 }
	\int_{k\theta}^{(k+1)\theta}
	\left|
	\frac{1}{z^{\alpha}}
 	\Big(\frac{z}{\theta}- k - \frac{1}{2}\Big)
	\right|
	dz,
\end{align*}
or something similar.
This strategy is simply not good enough to guarantee that $\mu_2$ studied in the next lemma is positive for all $\alpha$.
Indeed, applying the bounds above would only be enough to show that $\mu_2$ is positive in an interval of the form $(\alpha_0,2)$ with $\alpha_{0} > 0$.
Although more technical, we preferred to keep a consistent approach for the proof to avoiding having yet more cases.

\begin{lemma}\label{lem:Right-sign}
	For $\alpha \in (0,2)\setminus\{1\}$, the constant $\mu_2$ defined in \eqref{eq:def-mu2} is positive.
\end{lemma}
\begin{proof}
We start by focusing on the case $\alpha>1$. Recall the expression \eqref{asymp-of-R} for $K_2$. 
As $\alpha >1$, for $m \ge 1$, we have $m+\alpha >2$ and therefore
\begin{align*}
	\zeta(m+\alpha)-1
	&=
	\frac{1}{2^{m+\alpha}} + \sum_{k\ge 3} \frac{3^{m+\alpha}}{3^{m+\alpha}} \frac{1}{k^{m+\alpha}}
	\\ & \le
	 \frac{1}{2^{m+\alpha}} + \frac{1}{3^{m+\alpha}} \sum_{k\ge 3}  \Big(\frac{3}{k}\Big)^2
	\\ & \le
  \frac{1}{2^{m+\alpha}}\Bigg(1+ 9\Big(\zeta(2)-\frac{5}{4}\Big)\Bigg)
	\le \frac{5}{2^{m+\alpha}},
\end{align*}
where $\zeta(z)$ is the zeta-function.
Moreover, using Gautschi's inequality for the ratio of two Gamma functions, see e.g.~\cite{Gau}, we can write
\[
	(m+2)^{\alpha-2}<  \frac{\Gamma(m+\alpha)}{\Gamma(m+2)} <(m+1)^{\alpha-2} < m^{\alpha-2}.
\]
The upper bound on $K_2$ will follow from the lower bound on $\frac{K_2}{1-\alpha}$. We remove all even summands $m$ in the definition of $K_2$ and bound further
\begin{align*}
	\frac{2 K_2}{1-\alpha}	
& \ge
	\Bigg(
	\Bigg( \frac{2^{2-\alpha}-1}{2-\alpha}-\frac{3(2^{1-\alpha}-1)}{2(1-\alpha)}\Bigg)
	\\ &
	\phantom{\left( = \right)}-
	\frac{1}{2\Gamma(\alpha)}
	\sum_{m=0}^\infty (\zeta(2m+1+\alpha)-1) 
	\frac{(2m+1) \Gamma(2m+1+\alpha)}{\Gamma(2m+3)(2m+3)}
	\Bigg)
\\& \ge 
	\Bigg(
	\Bigg( \frac{2^{2-\alpha}-1}{2-\alpha}-\frac{3(2^{1-\alpha}-1)}{2(1-\alpha)}\Bigg)
	-
	\frac{5}{2\Gamma(\alpha)}
	\sum_{m=0}^\infty  \frac{(2m+2)^{\alpha-2}}{2^{2m+1+\alpha}}
	\Bigg)
\\& \ge 
	\Bigg(
	\Big( \frac{2^{2-\alpha}-1}{2-\alpha}-\frac{3(2^{1-\alpha}-1)}{2(1-\alpha)}\Big)
	-
	\frac{5}{12\Gamma(\alpha)}
\Bigg).
\end{align*}

Call  $u: (0,2) \rightarrow \mathbb{R}$ the map 
\begin{equation} \label{eq:bound-u}
t \mapsto \frac{1-t}{2} 	\Bigg(
	\Bigg( \frac{2^{2- t}-1}{2- t}-\frac{3(2^{1-t}-1)}{2(1-t)}\Bigg)
	-
	\frac{5}{12\Gamma(t)}
	\Bigg)
\end{equation}
which is increasing for $t>1$ and simple analysis shows that $u(t)$ is bounded from above by $\frac{1}{4}$.
Now we collect all previous contributions to the constant $\mu_2$ and show
that the sum above cannot flip the sign. This concludes that 
\begin{align*}
	\mu_2
	&=
2c_\alpha	\Bigg(
		\frac{1}{2(2-\alpha)} - \frac{1}{4} - K_2
	\Bigg)  > \frac{(\alpha-1) c_{\alpha}}{2-\alpha}
\end{align*}
 is positive for $\alpha>1$.

For $\alpha<1$, the strategy is similar, only this time, we proceed to get a function 
$\tilde{u}(\cdot)$ similar to \eqref{eq:bound-u}
but bounding $\frac{K_2}{2(1-\alpha)}$ from below (as $1-\alpha$ is now positive). 
\end{proof}

\begin{lemma} \label{lem:app-int}
Let $z \in [1,\infty)$ and define 
\[
	\Cin(z):= \int_0^z \frac{1-\cos(t)}{t}dt.
\]
We have that
\[
	\Cin(z) =  \log (z) + \gamma + \mc O (z^{-1}).
\]
as $z \longrightarrow \infty $ where $\gamma$ is the Euler-Mascheroni constant
\end{lemma}
\begin{proof}
By defining 
\[
	\Ci(z) := -\int_z^\infty \frac{\cos(t)}{t} dt
\]
the linearity of the integral implies that
\[
	\Cin(z)= \log (z)  - \Ci(z) + \int_1^\infty \frac{\cos t}{t} dt
	+ \int_0^1 \frac{1-\cos t}{t}dt.
\]
The exact value of the sum of the two integrals is not relevant for us,
but it is known to be $\gamma$. Therefore,
\[
	\Cin(z) = -\Ci(z) + \log (z) + \gamma.
\]
Finally we conclude the proof by noting that trivially $\Ci = \mc O(z^{-1})$ as $z \to \infty.$
\end{proof}
\section{Continuity estimates}

\begin{lemma}\label{lem:app-holder-quocient}
	Let $f \in C^{1,\beta}(I)$ for a closed interval $I$ containing the origin.
	Additionally, suppose that 
	\[
		f(x) = \mc O\left(  |x|^{\beta_0} \right)
		\text{ as } |x| \longrightarrow 0
	\]
	for some $\beta_0 \ge 1 + \beta$.
	Let $1<\beta_1 < \beta_0 $ and define the function 
	\[
		h(x):= \frac{f(x)}{|x|^{\beta_1}}.
	\]
	Then we have that the function $h$ is in $C^{0,\bar{\beta}}(I)$ where $\bar{\beta}= \frac{\beta_0-\beta_1}{\beta_0-\beta}$.
	If instead, we have that $f \in C^{0,\beta}(I)$ for some $\beta \in (0,1)$, and $0<\beta_1<\min\{1, \beta_0\}$, we get that $h \in C^{0,\bar{\beta}}(I)$ with $\bar{\beta} := \min\{\beta(1-\beta_1/\beta_0),\beta_0-\beta_1,1,\beta_1\}$.
\end{lemma}

\begin{proof}
We will prove the first claim, the second can be proved analogously.
Let $x,y \in I$ and assume, without loss of generality, that $|x|<|y|$,
\begin{align*}
	\left| 
		\frac{f(x)}{|x|^{\beta_1}}
		-
		\frac{f(y)}{|y|^{\beta_1}}
		\pm 
		\frac{f(x)}{|y|^{\beta_1}}
	\right|
&=
	\left| 
		\frac{f(x)}{|x|^{\beta_1}}
		\left (\frac{|y|^{\beta_1} - |x|^{\beta_1}}{|y|^{\beta_1}} \right )
		+
		\frac{f(x)-f(y)}{|y|^{\beta_1}}
	\right|
\\& \lesssim
		|x|^{\beta_0-\beta_1}
		\frac{\left||y|^{\beta_1} - |x|^{\beta_1}\right|}{|y|^{\beta_1}}
	+
		\frac{|f(x)-f(y)|}{|y|^{\beta_1}}.
\end{align*}
Now use that for $A,B> C >0$  real numbers and $\delta \in [0,1]$, we have $C \le A^\delta B^{1-\delta}$.
Regarding the first term on the right hand side, notice that
\[
\left||y|^{\beta_1} - |x|^{\beta_1}\right| \lesssim \min\{ |y|^{\beta_1},    |y|^{\beta_1-1} |x-y|\}
\]
so choosing $A=|y|^{\beta_1}, B= |y|^{\beta_1-1}|x-y|$ and $\delta=\beta_0-\beta_1$ we can easily see that
\[
|x|^{\beta_0-\beta_1}
		\frac{\left||y|^{\beta_1} - |x|^{\beta_1}\right|}{|y|^{\beta_1}} \lesssim |x-y|^{\delta} \leq |x-y|^{\bar{\beta}}. 
\]
To bound  the second term, remark that  $|f'(z)| \leq C |y|^{\beta}$ for all
$|z|\leq |y|$ since $f'\in C^{0,\beta}(I)$ and $f'(0)=0$, so
\[
  |f(x)-f(y)| \lesssim \min\{ |y|^{\beta_0}, |y|^{\beta} |x-y|\}
\]
and again choosing $A=|y|^{\beta_0}, B=|y|^{\beta} |x-y|$ and $\delta=\bar{\beta}$ the  claim follows.
\end{proof}

\begin{lemma}
	If $p_X(\cdot)$ is admissible with index $\alpha \in (1,2)$, then $\phi_X(\cdot)$ 
	is in $C^{1,\alpha-1-}(\bb T)$.
	If $p_X(\cdot)$ is admissible with index $1$, then $\phi_X$ is $C^{0,1-}(\bb T)$.
	\label{lemma-app-phi-smooth}
\end{lemma}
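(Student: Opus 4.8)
The plan is to extract from admissibility a fractional moment bound on $X$ and then read the H\"older regularity of $\phi_X$ (resp.\ of $\phi_X'$) straight off the Fourier series $\phi_X(\theta)=\sum_{x\in\bb Z}p_X(x)e^{i\theta x}$. The only feature of the expansion \eqref{def-char-alpha} I would use is the one-sided bound $0\le 1-\phi_X(\theta)=\mc O(|\theta|^{\alpha})$ as $\theta\to 0$: since $R_\alpha\subset(\alpha,2+\alpha)$, every regularity-set term and the error term is $\mc O(|\theta|^{\alpha+})=o(|\theta|^{\alpha})$, while $1-\phi_X\ge 0$ always. Combining this with the classical identity $|t|^{s}=c_s\int_0^\infty\theta^{-1-s}\bigl(1-\cos(t\theta)\bigr)\,d\theta$ (valid for $t\in\bb R$ and $s\in(0,2)$, with $c_s:=\bigl(\int_0^\infty\theta^{-1-s}(1-\cos\theta)\,d\theta\bigr)^{-1}\in(0,\infty)$), Tonelli's theorem, and the symmetry of $X$, one gets $\bb E[|X|^{s}]=c_s\int_0^\infty\theta^{-1-s}(1-\phi_X(\theta))\,d\theta$. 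Splitting the integral at a small $\delta>0$, the piece on $(0,\delta)$ is controlled by $\int_0^\delta\theta^{\alpha-1-s}\,d\theta$ and converges precisely when $s<\alpha$, while the piece on $(\delta,\infty)$ is harmless for $s>0$ because $1-\phi_X$ is bounded and $2\pi$-periodic. Hence $\bb E[|X|^{s}]<\infty$ for every $s\in(0,\alpha)$.

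With this in hand, for $\alpha\in(1,2)$ I would fix $\varepsilon\in(0,\alpha-1)$ and set $\gamma:=\alpha-1-\varepsilon\in(0,1)$, so that $\bb E[|X|^{1+\gamma}]<\infty$; since $1+\gamma>1$ this forces $\sum_x|x|\,p_X(x)<\infty$, legitimising term-by-term differentiation and giving the continuous $2\pi$-periodic function $\phi_X'(\theta)=-\sum_{x\in\bb Z}p_X(x)\,x\sin(\theta x)$ (the cosine part drops by oddness in $x$). For all $\theta,\theta'\in\bb R$ one then uses $|\sin(\theta x)-\sin(\theta'x)|\le\min\{2,|x|\,|\theta-\theta'|\}\le 2^{1-\gamma}\bigl(|x|\,|\theta-\theta'|\bigr)^{\gamma}$ to obtain
\[
	|\phi_X'(\theta)-\phi_X'(\theta')|\ \le\ 2^{1-\gamma}\,|\theta-\theta'|^{\gamma}\sum_{x\in\bb Z}|x|^{1+\gamma}p_X(x),
\]
so $\phi_X\in C^{1,\gamma}(\bb T)$; since $\varepsilon\in(0,\alpha-1)$ was arbitrary, $\phi_X\in C^{1,\alpha-1-}(\bb T)$.

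For $\alpha=1$ the same argument runs one derivative lower: fix $\varepsilon\in(0,1)$, set $\gamma:=1-\varepsilon$, note $\bb E[|X|^{\gamma}]<\infty$, and estimate directly, for $\theta,\theta'\in\bb R$,
\[
	|\phi_X(\theta)-\phi_X(\theta')|\ \le\ \sum_{x\in\bb Z}p_X(x)\,|\cos(\theta x)-\cos(\theta'x)|\ \le\ 2^{1-\gamma}\,|\theta-\theta'|^{\gamma}\sum_{x\in\bb Z}|x|^{\gamma}p_X(x),
\]
using again $|\cos(\theta x)-\cos(\theta'x)|\le\min\{2,|x|\,|\theta-\theta'|\}\le 2^{1-\gamma}(|x|\,|\theta-\theta'|)^{\gamma}$. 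Letting $\varepsilon\downarrow 0$ gives $\phi_X\in C^{0,1-}(\bb T)$.

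I expect the only delicate point — more a matter of care than a genuine obstacle — to be the first step. One really must route through a fractional moment of $X$ (equivalently, a tail bound on $p_X$), because $\phi_X$ is a priori only as smooth as we are trying to prove: no regularity of $\phi_X$ away from $\theta=0$ may be assumed, so the familiar heuristic ``the Fourier coefficients of a function with an isolated $|\theta|^{\alpha}$-type singularity decay like $|x|^{-1-\alpha}$'' is not available and must be replaced by the self-contained computation of $\bb E[|X|^{s}]$ above. Everything downstream is the elementary interpolation inequality $\min\{2,t\}\le 2^{1-\gamma}t^{\gamma}$ for $t\ge 0$, $\gamma\in[0,1]$.
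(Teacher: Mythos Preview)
Your argument is correct and follows the same two-step skeleton as the paper --- first extract $\bb E[|X|^{s}]<\infty$ for all $s<\alpha$, then deduce the claimed H\"older regularity of $\phi_X$ --- but it is executed in a noticeably more self-contained way. The paper obtains the moment bound by invoking that admissibility places $p_X$ in the domain of attraction of an $\alpha$-stable law (and hence has regularly varying tails), and then obtains the regularity by citing a standard ``decay of Fourier coefficients $\Rightarrow$ smoothness'' statement from Grafakos. You replace both citations by explicit computations: the moment bound via the representation $|t|^{s}=c_s\int_0^\infty\theta^{-1-s}(1-\cos(t\theta))\,d\theta$ together with Tonelli, and the regularity via the elementary interpolation $\min\{2,t\}\le 2^{1-\gamma}t^{\gamma}$ applied termwise to the Fourier series. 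Your route has the advantage of using nothing from the admissibility hypothesis beyond the single estimate $1-\phi_X(\theta)=\mc O(|\theta|^{\alpha})$ near $0$, which makes transparent exactly what is needed; the paper's route is shorter to write but leans on heavier external machinery.
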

\begin{proof}
Notice that $p_X(\cdot)$ being admissible implies that it is in the basin of attraction of an $\alpha$-stable distribution.
Therefore given $\beta \ge 0$ we have $\bb E[|X|^\beta]<\infty$ for $\beta\in (0,\alpha)$ and $p_X(x) \lesssim |x|^{-\alpha+}$.
Now, we just write that $p_X(\cdot)$ as the inverse Fourier transform of
	\[
		\mc F_{\bb T}(\phi_X)(-x) = p_X(x).
	\]
	Then use the classic relations between continuity and 
	decay of Fourier coefficients, see \cite[Proposition 3.3.12]{grafakos2008classical} to conclude the proof.
\end{proof}

\bibliographystyle{abbrv}
\bibliography{library}

\end{document}